\documentclass[]{amsart}
\usepackage[left=1.5in, top=1in, bottom=1in, right=1.5in, asymmetric]{geometry}
\geometry{letterpaper}                   
\usepackage{graphicx}
\usepackage{amssymb}
\usepackage{amsfonts}
\usepackage{bold-extra}
\usepackage{epstopdf}
\usepackage{amsmath}
\usepackage{amssymb}
\usepackage{epsfig}
\usepackage{graphicx}
\usepackage{changebar}
\usepackage{centernot}
\usepackage{rotating}
\usepackage{amsthm}
\usepackage{caption}
\usepackage{rotating}
\usepackage{setspace}
\usepackage{tikz}
\usepackage{subfigure}
\usepackage{esint}
\usepackage{cancel}
\usepackage{mathrsfs}
\usepackage{mathtools}
\usepackage{enumerate}
\usepackage{xcolor}
\allowdisplaybreaks

\newcommand{\R}{\mathbb{R}}

\newcommand{\Lip}{\text{Lip}}

\newcommand{\ve}{\varepsilon}
\newcommand{\vE}{\mc{E}}


\newcommand{\beq}{\begin{equation}}
\newcommand{\eeq}{\end{equation}}



\newcommand{\dx}{\partial}






\makeatletter
\DeclareRobustCommand\widecheck[1]{{\mathpalette\@widecheck{#1}}}
\def\@widecheck#1#2{%
	\setbox\z@\hbox{\m@th$#1#2$}%
	\setbox\tw@\hbox{\m@th$#1%
		\widehat{%
			\vrule\@width\z@\@height\ht\z@
			\vrule\@height\z@\@width\wd\z@}$}%
	\dp\tw@-\ht\z@
	\@tempdima\ht\z@ \advance\@tempdima2\ht\tw@ \divide\@tempdima\thr@@
	\setbox\tw@\hbox{%
		\raise\@tempdima\hbox{\scalebox{1}[-1]{\lower\@tempdima\box
				\tw@}}}%
	{\ooalign{\box\tw@ \cr \box\z@}}}
\makeatother

\def\mc{\mathcal}

\def\t{\text}

\DeclareGraphicsRule{.tif}{png}{.png}{`convert #1 `dirname #1`/`basename #1 .tif`.png}

\newtheorem{Lem}{Lemma}

\newtheorem{Thm}[Lem]{Theorem}

\newtheorem{Def}{Definition}

\numberwithin{equation}{section}

\title[Weak Solutions of Thin Film Equations]{Nonnegative Weak Solutions of Thin Film Equations Related to Viscous Flows in Cylindrical Geometries}

\author[J.L. Marzuola]{Jeremy L. Marzuola}
\email{marzuola@math.unc.edu}
\address{Department of Mathematics, University of North Carolina at Chapel Hill \\ CB\#3250
  Phillips Hall \\ Chapel Hill, NC 27599}
  
    \author[S. Swygert]{Sterling Swygert}
\email{swygerts@live.unc.edu}
\address{Department of Mathematics, University of North Carolina at Chapel Hill \\ CB\#3250
  Phillips Hall \\ Chapel Hill, NC 27599}

    \author[R. Taranets]{Roman Taranets}
\email{taranets\_r@yahoo.com}
\address{Institute of Applied Mathematics and Mechanics of the NASU \\ 1, Dobrovol\'skogo Str., 84100, Sloviansk, Ukraine}

\begin{document}

	\begin{abstract}
		Motivated by models for thin films coating cylinders in two physical cases proposed in  \cite{K} and \cite{FK}, we analyze the dynamics of corresponding thin film models.  The models are governed by nonlinear, fourth-order, degenerate, parabolic PDEs.  We prove, given positive and suitably regular initial data, the existence of weak solutions in all length scales of the cylinder, where all solutions are only local in time.  We also prove that given a length constraint on the cylinder, long-time and global in time weak solutions exist.  This analytical result is motivated by numerical work on related models of Reed Ogrosky \cite{ogrosky2013modeling} in conjunction with the works \cite{camassa2012ring,COO,camassa2017viscous,CMOV}.
	\end{abstract}
		\maketitle
	
	\section{Introduction}
	The analysis of liquid films is an area of mathematical research that has many applications, ranging from biological systems to engineering and has been a rich area of research over the last three decades. Generically, the films have one free boundary whose evolution is determined by the relationship between external forces and the surface tension of the free surface itself.  Many modeling and numerical studies have been done in order to understand these flows in different parameters and geometrical setups.  In particular \cite{Be} and \cite{Li} study films along an inclined plane and \cite{F},\cite{LL},\cite{COO} consider films in the exterior or interior of vertically oriented tubes.  The most significant physical difference between these two geometries is the free surface's azimuthal curvature dictating the surface stress in the cylindrical setting. The interior case of the cylindrical geometry is studied extensively in \cite{CMOV}. A specific class of the films, called thin films,  exploit the ratio between the thicknesses of the film and the cylinder.  In \cite{F}, an evolution equation is derived for a thin film coating either the outside or the inside of a cylinder.  This model was further studied in \cite{FK} and is explained in greater detail below.  Thin films equations have also been studied in the frameworks of the generalized Kuramoto-Sivashinsky equation and the Cahn-Hilliard equation \cite{BJNRZ},\cite{HR}.
	
	Much work in the area has drawn on the machinery developed in \cite{BF}, where the equation
	\begin{equation}\label{BernisFriedman}
		h_t + (f(h)h_{xxx})_x = 0, \quad f(h) = f_0(h)|h|^n, \quad 0 < f_0 \in C^{1+\alpha}(\R^1) , \alpha \in (0,1), \text{ and } n \geq 1
	\end{equation}
	is examined.  In particular, an energy-entropy method is developed in \cite{BF} in order to prove the existence of weak solutions to \eqref{BernisFriedman}, where Neumann boundary conditions on a finite interval are assumed.  Many others have used these tools in order to make progress from the analytical standpoint, including well-posedness, existence of weak solutions, finite-time blow-up, finite speed of propagation, and waiting time phenomena for various thin film type equations.  In \cite{G} an equation modeling the flow of a thin film over an inclined plane is analyzed and global in time existence of weak and strong solutions is given. There has also been a fine collection of work in proving finite-time blow-up in some of the models.  In \cite{CT}, an equation modeling the spreading of a thin film over a flat solid surface and studied and a blow-up result is proved.  Finite-time blow-up can also be seen  in \cite{BP}. A comprehensive discussion of the relationship between scaling properties and singularity formation can be found in \cite{GKMS}.
	
	Though there has been some work done in more general settings \cite{SW}, thin films coating a cylinder have been studied extensively.  Eres, Schwartz, and Weidner provided models and numerical work for a stationary, horizontally oriented cylinder in the presence of gravity \cite{ESW}.  Aside from modeling and numerical work, much analytic progress has been made by Bertozzi, Chugunova, Pugh, and Taranets.  A well known result is \cite{BP98}, in which a pure thin film equation is studied. Dynamics of thin films with convection on the exterior surface of a horizontally oriented cylinder rotating about it's axis of symmetry have also been studied and there are arguments for long-time existence of weak solutions \cite{CT2},\cite{CPT},\cite{Ta}.  A key difference is that the regularization in \cite{BP98} features global in time existence, while this is not generically guaranteed when a convection term is present.  In particular, any lower order terms complicate the derivation of corresponding a priori estimates and qualitative solution behavior. 
	
	In this paper, we study the dynamics of an incompressible thin fluid film on the exterior of a cylinder.  In particular we consider two specific one dimensional models.  The first model (Model I), derived in \cite{K}, is given by the initial boundary value problem
	\begin{equation}\label{Kerchman}
		\begin{cases}
			&h_t = - hh_x - S\left[ h^3(h_x + h_{xxx}) \right]_x \t{ in $Q_T$},\\
			&h(x,0) = h_0(x) \in H^1(\Omega), \\
			& \partial_x^j h(-a,\cdot) = \partial_x^j h(a,\cdot) \t{ for $t \in (0,T)$}, \ j = 0,1,2,3, 
		\end{cases}
	\end{equation}
	where $\Omega = (-a,a)$ is a bounded interval in $\R$ and $Q_T = \Omega \times (0,T)$.  The equation models the situation in which the cylinder is horizontally oriented, a horizontally directed air flow is present without gravity, and the cylinder is fully coated so that the only free boundary is that where the surface of the fluid meets the air.  Here, $h$ is the thickness of the film with initial value $h_0$ and $x$ is the longitudinal position.
	Model II, derived in \cite{FK}, is given by the initial boundary value problem
	\begin{equation}\label{Frenkel}
	\begin{cases}
	&h_t =  -2h^2h_x - S \left[ h^3(h_x + h_{xxx}) \right]_x \t{ in $Q_T$}, \\
	&h(x,0) = h_0(x) \in H^1(\Omega), \\
	& \partial_x^j h(-a,\cdot) = \partial_x^j h(a,\cdot) \text{ for $t \in (0,T), \ j = 0,1,2,3$}.
	\end{cases}
	\end{equation}
	This equation models the thickness of a thin film fully coating a vertically oriented cylinder in the presence of gravity.   
	In each model, $S$ is a modified Weber number. The rightmost terms in each equation represent the effects of surface tension in the azimuthal and axial directions, respectively, and the first terms on the right hand sides represent the forces acting on the films, e.g., air flow in Model I and gravity in Model II. Schematic diagrams of each model and respective coordinates can be found in \cite{K} and \cite{FK}.

	The main result of this work is to establish the following theorem.
	\begin{Thm}
	\label{thm:main}
		For sufficiently regular initial data $h_0 \geqslant 0$, there exist local in time weak solutions to \eqref{Kerchman} and \eqref{Frenkel}.  In addition, given $\Omega \subset \left( -\frac{\pi}{2}, \frac{\pi}{2} \right),$ there exist long-time non-negative solutions to \eqref{Kerchman} and global in time non-negative solutions to \eqref{Frenkel}.
	\end{Thm}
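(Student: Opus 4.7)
My plan follows the four-step Bernis--Friedman strategy of \cite{BF}, modified to accommodate the first-order convection terms in \eqref{Kerchman} and \eqref{Frenkel}: regularize, derive uniform energy and entropy estimates, pass to a limit to get local solutions, and finally close the energy bound globally in time when the domain is short.

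For the first step I would replace the degenerate mobility $h^3$ by a non-degenerate approximation $f_\varepsilon(h)=h^4/(h+\varepsilon)$, shift the initial datum to $h_0+\delta$ with $\delta>0$, and construct smooth strictly positive local solutions $h_{\varepsilon,\delta}$ to the regularized problem via a Schauder fixed-point argument on its linearization. For the second step, $\varepsilon$-uniform estimates come from testing against $-(h+h_{xx})$ and against $G_\varepsilon'(h)$, where $G_\varepsilon''(h)=1/f_\varepsilon(h)$. The first test function yields, for Model I,
\begin{equation*}
\frac{d}{dt}\int_\Omega \tfrac{1}{2}(h_x^2-h^2)\,dx + S\int_\Omega f_\varepsilon(h)(h_x+h_{xxx})^2\,dx = -\tfrac{1}{2}\int_\Omega h_x^3\,dx,
\end{equation*}
with the analogous Model II identity producing the right-hand side $\tfrac{2}{3}\int_\Omega h^3 h_{xxx}\,dx$ after using $2h^2 h_x=\tfrac{2}{3}(h^3)_x$. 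The entropy test yields uniform control on $\int_\Omega G_\varepsilon(h)$ and on $\int_0^T\!\!\int_\Omega (h_{xx})^2\,dx\,dt$, which are exactly what is needed in the limit to preserve non-negativity and to identify the nonlinear flux $f_\varepsilon(h)(h_x+h_{xxx})$ on $\{h>0\}$ as in \cite{BF}.

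For the third step, these bounds together with an Aubin--Lions compactness argument give subsequential convergence $h_{\varepsilon,\delta}\to h\ge 0$ in appropriate spaces; combined with the flux identification of \cite{BF} this yields a weak solution on a maximal time interval $[0,T_*)$, which is the local-in-time conclusion for arbitrary $\Omega=(-a,a)$.

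The main obstacle, and the content of the long-time statement, is the indefinite sign of the right-hand sides of the energy identities, which destroys the gradient-flow monotonicity of $E(h)=\tfrac{1}{2}\int_\Omega (h_x^2-h^2)\,dx$ and is absent in the pure thin-film setting \cite{BP98}. The hypothesis $\Omega\subset(-\pi/2,\pi/2)$ enters here through the Poincar\'e--Wirtinger inequality: for periodic, mass-$M$ functions on $(-a,a)$ one has $\|h-\bar h\|_{L^2}^2\le (a/\pi)^2\|h_x\|_{L^2}^2$ with $(a/\pi)^2<1/4$, so
\begin{equation*}
E(h)+\frac{M^2}{4a} \ \ge\ \frac{3}{8}\int_\Omega h_x^2\,dx,
\end{equation*}
and $E$ becomes coercive on $H^1$ modulo conserved mass. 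Combined with the one-dimensional embedding $H^1\hookrightarrow L^\infty$, Gagliardo--Nirenberg and Young inequalities then allow the indefinite cubic terms ($\int h_x^3\,dx$ in Model I and its analogue in Model II) to be absorbed into the dissipation $S\int_\Omega f_\varepsilon(h)(h_x+h_{xxx})^2\,dx$, leaving a Gronwall-type differential inequality for $E(h(t))$. For Model I this produces an exponentially-growing bound and hence long-time existence, while for Model II the stronger divergence structure of $2h^2 h_x$ eliminates the Gronwall factor and yields a uniform-in-$t$ bound on $E$, giving global existence.
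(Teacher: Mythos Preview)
Your overall Bernis--Friedman scaffold (regularize, energy $+$ entropy, compactness, non-negativity) matches the paper's, and the minor differences---$h^4/(h+\varepsilon)$ versus $|h|^3+\varepsilon$, Aubin--Lions versus Arzel\`a--Ascoli via $C^{1/2,1/8}$ H\"older bounds---are inessential. There is, however, a genuine gap in your global-in-time argument for Model~II.

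Your energy identity for Model~II has right-hand side $\tfrac{2}{3}\int_\Omega h^3 h_{xxx}\,dx$. Writing this as $\tfrac{2}{3}\int_\Omega h^3(h_x+h_{xxx})\,dx$ (using $\int h^3 h_x=0$) and applying a weighted Cauchy inequality absorbs half of it into the dissipation $S\int f_\varepsilon(h)(h_x+h_{xxx})^2$, but the leftover is $C\int_\Omega h^3\,dx$, which is \emph{cubic}, not linear, in $h$. Via Sobolev/Poincar\'e this scales like $(E+1)^{3/2}$, so the resulting differential inequality $\tfrac{d}{dt}E\le C(E+1)^{3/2}$ gives only a finite-time bound---exactly the local result, not a global one. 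The ``stronger divergence structure'' of $2h^2h_x$ does not by itself eliminate the Gronwall factor. The paper's trick is different and sharper: observe that $2h^2h_x=\big[\tfrac{2}{3}h^3\big]_x=S\big[h^3\cdot\tfrac{2}{3S}\big]_x=S\big[h^3(\tfrac{2}{3S}x)_x\big]_x$, so the convection can be absorbed into the flux, giving
\[
h_t+S\Big[h^3\Big(h+h_{xx}+\tfrac{2}{3S}x\Big)_x\Big]_x=0.
\]
Testing with the modified pressure $h+h_{xx}+\tfrac{2}{3S}x$ (and imposing the corresponding compatibility condition at the boundary) yields an \emph{exact} energy identity with zero remainder for $\tilde{\mathcal E}(h)=\tfrac{1}{2}\int_\Omega(h_x^2-h^2-\tfrac{4}{3S}xh)\,dx$; Poincar\'e on $|\Omega|<\pi$ then gives a uniform-in-$t$ bound on $\|h_x\|_{L^2}$. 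A smaller point: for Model~I long-time, the paper does not bound $\int h_x^3$ via Gagliardo--Nirenberg (which is awkward since the dissipation carries a degenerate weight $h^3$ rather than controlling $\|h_{xx}\|_{L^2}$); instead it writes $-\tfrac{1}{2}\int h^2 h_{xxx}=-\tfrac{1}{2}\int h^2(h_x+h_{xxx})$ and uses weighted Cauchy, leaving only $\tfrac{1}{2S}\int|h|$, which Poincar\'e controls by $\|h_x\|_{L^2}$. A nonlinear Gr\"onwall then gives polynomial (quadratic), not exponential, growth of $\|h_x\|_{L^2}^2$.
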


The goal of the remainder to the result is to provide full details on the function spaces, estimates, and notion of weak solutions required for analysis of both Model I and Model II in order to establish Theorem \ref{thm:main} precisely.  Furthermore, in all cases, we prove the non-negativity property, i.e. positive initial conditions yields non-negative solutions.   In Sections \ref{M1Preliminaries} and \ref{M1EnergyEstimates}, we introduce a properly regularized version of \eqref{Kerchman} and analyze it using energy estimates.  When $\Omega \subset \left( -\frac{\pi}{2}, \frac{\pi}{2} \right),$ we prove the existence of long-time weak solutions to Model I.   Then, in Section \ref{M1WeakSolutions}, we define carefully weak solutions and demonstrate that the limit of the solution to the regularized problem exists in the proper fashion.   We prove in Section \ref{M1Non-Negativity} that the limit is indeed non-negative.  Then, in Section \ref{M2}, we examine Model II and give a brief description of how to prove the local in time existence of weak solutions by proving analogous energy estimates.   We then provide a proof for the existence of global in time solutions to Model II. 

Though many of the components of the proof are naturally analogous to those for Model I, the energy estimates are treated with a modified approach and are given in details.  We note that though one may be inclined to consider using an interpolation inequality as in \cite{BP98} in order to prove long-time and global estimates, these inequalities are less quantitative than the Poincar\'{e} inequality and therefore cannot give sharp results. Finally, in section \ref{FutureWork} we discuss future work in this analysis, including natural extensions of the arguments found here to long-wave models and a mixing of Model I and Model II.

	
	\section{Model I Preliminaries}\label{M1Preliminaries}
	
	One notices that \eqref{Kerchman} is degenerate if $h$ vanishes at any point in the domain, and in order for the equation to be uniformly parabolic, it must be the case that $h \geqslant \delta$ in $Q_T$ for some $\delta > 0$.  In order to remedy this, one may consider the regularized problem
	\begin{equation}\label{KerchmanPert}
	\begin{cases}
	&h_{\ve,t} = -h_\ve h_{\ve,x} - S\left[ \left( |h_\ve|^3 + \ve \right)(h_{\ve,x} + h_{\ve,xxx}) \right]_x \t{ in $Q_T$}, \\
	&h_\ve(x,0) = h_{0,\ve}(x) \in C^{4 + \gamma}(\Omega) \t{ for some } \gamma \in (0,1), \\
	& \partial_x^j h_\ve(-a,\cdot) = \partial_x^j h_\ve(a,\cdot) \t{ for $t \in (0,T)$}, \ j = 0,1,2,3, 
	\end{cases}
	\end{equation}
	Observe that the right hand sides of both \eqref{Kerchman} and \eqref{KerchmanPert} have a gradient form, i.e. $h_t = - \t{div} J$.  This fact and the periodic boundary conditions tell us that integrating over $Q_T$ yields
	\begin{equation*}
	\int_\Omega h_\ve(x,T) \ dx = \int_\Omega h_{0,\ve} \ dx =: M_\ve < \infty
	\end{equation*}
	for each $0 \leq T \leq T_\ve$ and for each $\ve > 0$.  In other words, \eqref{Kerchman} and \eqref{KerchmanPert} are both conservation laws and conserve $\int_\Omega h(x,t) \ dx$ over time.  We assume that $h_{0,\ve} \to h_0$ strongly in $H^1(\Omega)$.  Then we can bound $M_\ve$ uniformly by $M := \int_\Omega h_0 \ dx > 0$.  Thus for $\ve > 0$ sufficiently small we have $0 < \int_\Omega h_{0,\ve} \ dx \leq M < \infty$.
	
	\subsection{Functionals}\label{Main Functionals}
	Here we define some different energy terms:
	\begin{equation*}
		E_0(h) = \frac{1}{2} \int_\Omega h^2 \ dx, \quad	E_1(h) = \frac{1}{2} \int_\Omega h_x^2 \ dx, \quad \mc{E}(h) = \frac{1}{2} \int_{\Omega} (h_x^2 - h^2) \ dx.
	\end{equation*}
	We also define the functions $g_\ve$ and $G_\ve$ by
	\begin{align}
		\label{g-eps} g_\ve(s) &= -\int_s^A \frac{dr}{|r|^3 + \ve}, \\
		\label{G-eps} G_\ve(s) &= - \int_s^A g_\ve(r) \ dr,
	\end{align}
	where $A>0$ is a finite real number to be specified later.
	
	The use of these functionals naturally draws on their use in \cite{BF}.  There are some useful  statements \cite{BF} makes of $g_\ve$ and $G_\ve$:
	\begin{alignat*}{2}
		&G_\ve'(s) = g_\ve(s), &&G_\ve''(s) = g_\ve'(s) = \frac{1}{|s|^3 + \ve}, \\
		&g_\ve(s) \leq 0 \ \forall s \leq A, &&G_\ve(s) \geq 0 \ \forall s \in \R, \\
		&G_\ve(s) \leq G_0(s) \ \forall s \in \R,
	\end{alignat*}
	where $G_0 = \lim_{\ve \to 0} G_\ve.$  Finally,
	\begin{equation*}
		G_0(s) = \frac{1}{2s} - \frac{1}{A} + \frac{s}{2A^2} = \frac{(A-s)^2}{2A^2s} \geq 0 \ \forall s > 0 .
	\end{equation*}
	
	\subsection{Model I Energy Identities}
	We first work with the regularized equation \eqref{KerchmanPert} to derive a priori estimates.  To begin, we draw on general parabolic theory in order to demonstrate that the perturbed equation is well-posed.  Consider the operator 
	\begin{equation*}
		P_\ve(x,y,t) = -y\dx_x - S\dx_x \left[ (|y|^3 + \ve)(\dx_x + \dx_{xxx})\right].
	\end{equation*}
	Then the equation 
	\begin{equation*}
		h_t = P_\ve h
	\end{equation*}
	is uniformly parabolic in a region $Q = [0,R] \times \Omega \times [0,T_\ve]$ in the sense of Petrovsky \cite{E} as the characteristic equation
	\begin{equation*}
		D(\lambda,\sigma)= -S(|y|^3 + \ve)\sigma^4 - \lambda = 0
	\end{equation*}
	has root $\lambda = -S(|y|^3 + \ve)\sigma^4$ which can be bounded above by $-\delta(\ve) < 0$ so long as $|y| < R := R(\ve)$ and $S > 0$.  Theorem 7.3 in \cite{E} tells us that there exists a unique classical solution $h_\ve \in C_{x,t}^{4 + \gamma,1 + \frac{\gamma}{4}}(Q_{T_\ve})$ to \eqref{KerchmanPert}, where $\gamma \in (0,1)$. In the rest of this section and the following two sections we will write $h = h_\ve$.

	Multiplying \eqref{KerchmanPert} by $h$ and integrating over $\Omega$, we obtain
	\begin{align*}
		\int_\Omega hh_t \ dx &= -\frac{1}{2} \int_\Omega h^2h_x \ dx - S\int_\Omega \left[ \left( |h|^3 + \ve \right) (h_x + h_{xxx})\right]_x h \ dx \\
		&= S\int_\Omega \left(|h|^3 + \ve \right)h_x^2 \ dx + S\int_\Omega \left( |h|^3 + \ve \right) h_xh_{xxx} \ dx,
	\end{align*}
	where the last line uses integration by parts and periodic boundary conditions.
	Similarly, we can multiply \eqref{KerchmanPert} by $-h_{xx}$ and integrate over $\Omega$ to see that
	\begin{align*}
	\int_\Omega h_xh_{x,t} \ dx &= -\int_\Omega h_th_{xx} \ dx \\
	&= \int_\Omega hh_xh_{xx} \ dx + S\int_\Omega \left[ \left( |h|^3 + \ve \right) \left( h_x + h_{xxx} \right) \right]_x h_{xx} \ dx \\
	&= \frac{1}{2} \int_\Omega \left( h^2 \right)_x h_{xx} \ dx - S\int_\Omega \left[ \left( |h|^3 + \ve \right) \left( h_x + h_{xxx} \right) \right] h_{xxx} \ dx  \\
	&= -\frac{1}{2} \int_\Omega h^2 h_{xxx} \ dx - S\int_\Omega \left( |h|^3 + \ve \right)h_xh_{xxx} \ dx - S\int_\Omega \left( |h|^3 + \ve \right) h_{xxx}^2 \ dx.
	\end{align*}
	Adding the left hand and right hand sides of the chains of equalities, we have
	\begin{equation}\label{V'}
		\frac{1}{2} \frac{d}{dt} ||h||_{H^1(\Omega)}^2 + S\int_\Omega \left( |h|^3 + \ve  \right)h_{xxx}^2 \ dx = -\frac{1}{2} \int_\Omega h^2h_{xxx} \ dx + S\int_\Omega \left( |h|^3 + \ve \right)h_x^2.
	\end{equation}

	
	\section{Model I Energy Estimates}\label{M1EnergyEstimates}
	
	\subsection{Local in Time Estimates}\label{M1ShortTimeEstimates}
	We can obtain uniform bounds on $||h_\ve(\cdot,T)||_{H^1(\Omega)}^2$ for $\ve > 0$ and $T > 0$ sufficiently small.
	\begin{Lem}\label{KSTEstLem}
		Suppose $h_0$ as in \eqref{Kerchman} and let $h_{0,\ve} \to h_0$ strongly in $H^1(\Omega)$.  Let $h_\ve$ be a solution to \eqref{KerchmanPert} in $Q_{T_\ve}$.  Then there is a time $T_\t{loc} > 0$ such that $h_\ve$ satisfies a priori estimate
		\begin{equation*}
			||h_\ve(\cdot,T)||_{H^1(\Omega)}^2 \leq 2^{2/3} \max \left\{ 1, ||h_0||_{H^1(\Omega)}^2 \right\}
		\end{equation*}
		for $\ve > 0$ and $0 \leq T \leq T_\t{loc}.$
		\begin{proof}
			Let $\ve > 0$ and let $h := h_\ve$ be a solution to \eqref{KerchmanPert}.  Recalling \eqref{V'}, we can bound using Cauchy's inequality and the compact embedding of $H^1$ in $L^\infty$:
			\begin{align*}
				& \frac{1}{2} \frac{d}{dt} ||h||_{H^1(\Omega)}^2 + S\int_\Omega \left( |h|^3 + \ve  \right)h_{xxx}^2 \ dx \\
				& \hspace{1cm} \leq \frac{S}{4} \int_\Omega \left( |h|^3 + \ve \right) h_{xxx}^2 \ dx + \frac{1}{4S} \int_\Omega |h| \ dx + S\int_\Omega \left( |h|^3 + \ve \right) h_x^2 \ dx.
			\end{align*}
			As 
			\begin{equation*}
				\int_\Omega |h| \ dx \leq |\Omega|^{1/2} \left( \int_\Omega h^2 \ dx \right)^{1/2},
			\end{equation*}
			then 
			\begin{equation*}
					\frac{1}{2} \frac{d}{dt} ||h||_{H^1(\Omega)}^2 + 
					\frac{3S}{4}\int_\Omega \left( |h|^3 + \ve  \right)h_{xxx}^2 \ dx \leq C_S||h||_{H^1(\Omega)}^5 + S\ve||h||_{H^1(\Omega)}^2 + \frac{|\Omega|^{1/2}}{4S} ||h||_{H^1(\Omega)}.
			\end{equation*}
			Setting $V_\ve(t) := \max \left( 1, ||h_\ve(\cdot,t)||_{H^1(\Omega)}^2 \right)$, it follows that $V_\ve$ satisfies
			\begin{equation*}
				V_\ve'(t) \leq C_SV_\ve^{5/2}(t).
			\end{equation*}
			Dividing by $V_\ve^{5/2}(t)$ and integrating yields
			\begin{equation}\label{STBound}
				V_\ve(t) \leq \left( V_\ve^{-3/2}(0) - \frac{3}{2}C_St \right)^{-2/3}
			\end{equation}
			for $0 \leq t < T^* = \frac{2}{3C_S}V_\ve^{-3/2}(0).$  As $h_{0,\ve} \to h_0$ strongly in $H^1(\Omega)$, \eqref{STBound} implies that $||h_\ve||_{H^1(\Omega)}$ is uniformly bounded for $0 \leq T \leq T_\t{loc} = \frac{1}{3C_S}V^{-3/2}(0)$ and independent of $\ve > 0$.
		\end{proof}
	\end{Lem}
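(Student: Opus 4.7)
My plan is to work directly from the energy identity (\ref{V'}), dispose of the lone sign-indefinite term on the right via a weighted Young inequality that absorbs part of the good fourth-derivative dissipation, and then reduce the whole thing to a scalar ODE of comparison type.

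First I would estimate the cross term $-\tfrac{1}{2}\int_\Omega h^2 h_{xxx}\,dx$ by splitting $h^2 = |h|^{1/2}\cdot |h|^{3/2}$ and applying Cauchy's inequality with a parameter calibrated against the dissipation coefficient $S$. This should yield a bound of the form
\begin{equation*}
\Big|\tfrac{1}{2}\int_\Omega h^2 h_{xxx}\,dx\Big| \leq \tfrac{S}{4}\int_\Omega(|h|^3+\ve)\,h_{xxx}^2\,dx + \tfrac{1}{4S}\int_\Omega |h|\,dx,
\end{equation*}
so that only $\tfrac{S}{4}\int(|h|^3+\ve)h_{xxx}^2\,dx$ of the good left-hand-side term is consumed.

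Next I would control the surface-tension source $S\int_\Omega(|h|^3+\ve)h_x^2\,dx$ via the compact embedding $H^1(\Omega)\hookrightarrow L^\infty(\Omega)$, obtaining an upper bound of the form $C||h||_{H^1}^5 + S\ve||h||_{H^1}^2$, and bound $\int_\Omega |h|\,dx$ by $|\Omega|^{1/2}||h||_{L^2}$ through Cauchy--Schwarz. Discarding the strictly positive residual dissipation on the left then collapses (\ref{V'}) into
\begin{equation*}
\tfrac{d}{dt}||h||_{H^1(\Omega)}^2 \leq C_S||h||_{H^1}^5 + \text{(terms of lower order in } ||h||_{H^1}\text{)}.
\end{equation*}
Introducing $V_\ve(t) := \max\{1,\,||h_\ve(\cdot,t)||_{H^1}^2\}$ lumps the lower-order tail into the leading power and reduces the whole estimate to $V_\ve'(t) \leq C_S V_\ve(t)^{5/2}$.

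Finally, separating variables gives the explicit bound $V_\ve(t) \leq \bigl(V_\ve(0)^{-3/2} - \tfrac{3}{2}C_S t\bigr)^{-2/3}$, valid up to the blow-up time $T^* = \tfrac{2}{3C_S}V_\ve(0)^{-3/2}$. Taking $T_\t{loc}$ to be half of $T^*$ produces precisely the factor $2^{2/3}$ in the stated bound, and uniformity in $\ve$ is ensured by the assumed strong $H^1$-convergence $h_{0,\ve}\to h_0$, which forces $V_\ve(0) \leq V(0) := \max\{1,||h_0||_{H^1}^2\}$ for all sufficiently small $\ve$. The main technical point is the calibration of the Young weight in the first step: one must leave a strictly positive fraction of the dissipation term on the left, since otherwise the ODE closure fails and no time-independent lower bound on the existence time is available. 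Everything after that step is a standard finite-time comparison with a power-type scalar ODE.
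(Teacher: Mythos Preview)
Your proposal is correct and follows essentially the same route as the paper: the identical Young-inequality splitting of $-\tfrac{1}{2}\int h^2 h_{xxx}$ (absorbing $\tfrac{S}{4}$ of the dissipation and leaving $\tfrac{1}{4S}\int|h|$), the same $H^1\hookrightarrow L^\infty$ bound on the surface-tension source, and the same reduction to $V_\ve' \le C_S V_\ve^{5/2}$ with $T_{\mathrm{loc}}=\tfrac{1}{2}T^*$ producing the $2^{2/3}$ factor. The only nitpick is that strong $H^1$-convergence gives $V_\ve(0)\to V(0)$ rather than $V_\ve(0)\le V(0)$, but this is harmless since any uniform upper bound on $V_\ve(0)$ suffices.
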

		
	\subsection{Long-time Estimates on $\Omega \subset \left( -\frac{\pi}{2},\frac{\pi}{2} \right)$}\label{M1LongTimeEstimates}
	
	Here, we require that for $\Omega = (-a,a)$, we have $a < \frac{\pi}{2}$.  We assume that $a < \frac{\pi}{2}$ in order to properly bootstrap $||h||_{L^2(\Omega)}$ by $||h_x||_{L^2(\Omega)}$ in Lemma \ref{PertGronwallBound}, which is done using the quantitative nature of the Poincar\'{e} inequality in $L^2(\Omega)$ with the exact constant. Note that a scaling does not eliminate the condition on smallness of domains. We consider \eqref{KerchmanPert} and fix $\ve > 0$.  The existence theory in \cite{E} (Theorem 6.3 on page 302) tells us that there is a classical solution $h_\ve \in C^{4+ \gamma,1 + \gamma/4}(Q_{\tau_\ve})$ to \eqref{KerchmanPert} for some small time $\tau_\ve > 0$.  It is further demonstrated in \cite{E} (Theorem 9.3 on page 316) that if we have a priori control $||h_\ve||_{L^\infty(Q_{T_\ve})} \leq A$ and control on the H\"{o}lder norms in $C_{x,t}^{1/2,1/8}(Q_{T_\ve})$ for some $T > \tau_\ve$, then, in fact, $h_\ve$ can be continued in time as a classical solution to \eqref{KerchmanPert} on $Q_T$.  We use the the functional $\mc{E}(h) = \frac{1}{2} \int_{\Omega} (h_x^2 - h^2) \ dx$ to demonstrate such control.
	
	Before proceeding we require the Gr\"{o}nwall type inequality found, for example, in \cite{Gy}:
	\begin{Lem}\label{NLGronwall}
		Suppose that $y(t)$ satisfies the inequality
		\begin{equation*}
			y(t) \leq at + b + c \int_{t_0}^t \ g(y(s)) \ ds \ \forall t \geq t_0 \geq 0,
		\end{equation*}
		where $y$ is a non-negative continuous function, $g$ is a positive nondecreasing function, and $a,b,c > 0$.  Then 
		\begin{equation*}
			y(t) \leq G^{-1} \left\{ G(at_0 + b) + \left( \frac{a}{g(at_0 + b)} + c \right)(t-t_0) \right\},
		\end{equation*}
		where 
		\begin{equation*}
			G(t) = \int_\eta^t \frac{ds}{g(s)} \ \text{for } \eta, t > 0.
		\end{equation*}
			\end{Lem}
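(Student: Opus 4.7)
The plan is to reduce the integral inequality to a differential inequality by a standard comparison argument, then separate variables and invert $G$. Define
\begin{equation*}
z(t) := at + b + c\int_{t_0}^t g(y(s))\,ds,
\end{equation*}
so that $y(t)\le z(t)$ by hypothesis and $z(t_0)=at_0+b$. Since $g>0$, the function $z$ is $C^1$ and strictly increasing, with $z'(t)=a+cg(y(t))$. Because $g$ is nondecreasing and $y(s)\le z(s)$, one obtains the differential inequality $z'(t)\le a+cg(z(t))$.

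The crucial algebraic step — and the one the final form of the conclusion is engineered around — is to absorb the constant $a$ into the $g(z)$ factor. Monotonicity of $z$ gives $z(t)\ge at_0+b$ for all $t\ge t_0$, so by monotonicity of $g$, $g(z(t))\ge g(at_0+b)$, and therefore
\begin{equation*}
a+cg(z(t)) = \left(\frac{a}{g(z(t))}+c\right)g(z(t)) \le \left(\frac{a}{g(at_0+b)}+c\right)g(z(t)).
\end{equation*}
This yields $z'(t)/g(z(t)) \le \frac{a}{g(at_0+b)}+c$, which is the separated form I would then integrate.

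Next I would integrate from $t_0$ to $t$ and use the change of variables $u=z(s)$ to recognize the left-hand side as $G(z(t))-G(z(t_0)) = G(z(t))-G(at_0+b)$, where $G(t)=\int_\eta^t ds/g(s)$ (the choice of base point $\eta$ drops out of the difference). This gives
\begin{equation*}
G(z(t)) \le G(at_0+b) + \left(\frac{a}{g(at_0+b)}+c\right)(t-t_0).
\end{equation*}
Since $G'=1/g>0$, the function $G$ is strictly increasing, so $G^{-1}$ exists and is also increasing; applying it and then using $y(t)\le z(t)$ produces exactly the stated bound.

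I expect no serious obstacle: the proof is entirely elementary once the monotone-comparison viewpoint is adopted. The only point that benefits from care is justifying that the $G^{-1}$ step is legal, i.e.\ that $G(at_0+b)+(\frac{a}{g(at_0+b)}+c)(t-t_0)$ lies in the range of $G$ on the relevant interval; this is immediate because $G$ maps $(0,\infty)$ monotonically and because $z(t)$ is finite for $t$ in the interval where the original inequality is assumed to hold, so the bound holds precisely on that interval.
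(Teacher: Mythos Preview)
Your proof is correct and follows essentially the same route as the paper: define the right-hand side as an auxiliary function, derive the differential inequality $z'\le a+cg(z)$, bound $a/g(z(t))$ by $a/g(at_0+b)$ using monotonicity, integrate, and invert $G$. The paper's argument is identical up to notation (it uses $w$ for your $z$) and also flags the domain constraint for $G^{-1}$.
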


		\begin{proof}
			Begin by defining
			\begin{equation*}
				w(t) = at + b + c \int_{t_0}^t \ g(y(s)) \ ds.
			\end{equation*}
			Because $g$ is nondecreasing, $g(y(t)) \leq g(w(t))$.  Notice that
			\begin{equation*}
				w'(t) = a + cg(y(t)).
			\end{equation*}
			whence it follows that
			\begin{equation*}
				w'(t) \leq a + cg(w(t)).
			\end{equation*}
			Using the fact that $g > 0,$ we obtain
			\begin{equation*}
				\frac{w'(t)}{g(w(t))} \leq \frac{a}{g(w(t))} + c.
			\end{equation*}
			Again using that $g > 0$ is nondecreasing and noticing that $w$ is non-decreasing, it follows that 
			\begin{equation*}
				\frac{w'(t)}{g(w(t))} \leq \frac{a}{g(at_0 + b)} + c.
			\end{equation*}
			Integrating yields
			\begin{equation*}
				G(w(t)) \leq G(w(t_0)) + \left( \frac{a}{g(at_0 + b)} + c \right)(t- t_0),
			\end{equation*}
			and applying the inverse of $G$ yields the result so long as $t \in [t_0,T^*]$ where $T$ is chosen such that 
			\[ G(w(t_0)) + \left( \frac{a}{g(at_0 + b)} + c \right)(T-t_0) \in \t{Dom}(G^{-1}) \ \forall T \in [t_0,T^*].\]
		\end{proof}
	
	\begin{Lem}\label{PertGronwallBound}
		Fix $\ve > 0$ and let $h_\ve$ be a solution of \eqref{KerchmanPert} up to time  $T > 0$.   Then $h_\ve$ satisfies the a priori estimate 
		\begin{equation}\label{LongtimeUnifBound1}
			||h_{\ve,x} (\cdot,t)||_{L^2(\Omega)}^2 \ dx \leq K_0 + K_1t + K_2 t^2.
		\end{equation}
			\end{Lem}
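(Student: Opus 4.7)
The plan is to control the Lyapunov-like functional $\mc{E}(h) = \frac{1}{2}\int_\Omega(h_x^2 - h^2)\,dx$ (defined in Subsection~\ref{Main Functionals}) along the flow, and then to recover the $L^2$ bound on $h_{\ve,x}$ via the Poincar\'e (Wirtinger) inequality on $\Omega = (-a,a)$, whose constant can be made safely less than one precisely because $a < \pi/2$. The functional $\mc{E}$ is the natural Lyapunov quantity here: its variational derivative is $-(h+h_{xx})$, and $(h+h_{xx})_x = h_x+h_{xxx}$ matches the direction of the surface-tension flux in \eqref{KerchmanPert}, so differentiating $\mc{E}$ along the equation produces a clean dissipation identity with no stray cross terms.

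First I would compute $\frac{d}{dt}\mc{E}(h_\ve) = -\int_\Omega(h_\ve+h_{\ve,xx})h_{\ve,t}\,dx$, substitute \eqref{KerchmanPert}, and integrate by parts using the periodic boundary conditions. The convective term collapses via the identity $\int hh_xh_{xx}\,dx = -\tfrac{1}{2}\int h_x^3\,dx$, while the surface-tension term becomes the nonnegative dissipation, yielding
\begin{equation*}
\frac{d}{dt}\mc{E}(h_\ve) + S\int_\Omega(|h_\ve|^3+\ve)(h_{\ve,x}+h_{\ve,xxx})^2\,dx = -\tfrac{1}{2}\int_\Omega h_{\ve,x}^3\,dx.
\end{equation*}
To bound the cubic term I would write $\int h_{\ve,x}^3\,dx = -2\int h_\ve h_{\ve,x}h_{\ve,xx}\,dx$ and apply Cauchy--Schwarz. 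The factor $\|h_{\ve,xx}\|_{L^2}$ is controlled by the inequality $\|h_{\ve,xx}\|_{L^2}^2 \leq (a/\pi)^2\int_\Omega(h_{\ve,x}+h_{\ve,xxx})^2\,dx + 2\|h_{\ve,x}\|_{L^2}^2$, obtained by applying Wirtinger to the zero-mean function $h_\ve+h_{\ve,xx}-M/(2a)$; the $\|h_\ve\|_{L^\infty}$ factor is then controlled by 1D Sobolev embedding applied to $h_\ve-M/(2a)$, giving $\|h_\ve\|_{L^\infty} \leq C\|h_{\ve,x}\|_{L^2} + M/(2a)$. A portion of the resulting right hand side is absorbed into the dissipation on the left via Young's inequality, using the lower bound $(|h_\ve|^3+\ve) \geq \ve$.

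What remains is a differential inequality for $y(t):=\|h_{\ve,x}(\cdot,t)\|_{L^2}^2$. The Poincar\'e bound
\begin{equation*}
y(t) \leq \frac{1}{1-(a/\pi)^2}\Bigl(2\mc{E}(h_\ve)(t) + \frac{M^2}{2a}\Bigr),
\end{equation*}
valid because $a<\pi/2$ keeps $(a/\pi)^2$ strictly below one, closes the estimate in terms of $y$ alone. Integrating in time puts the inequality into the form required by the nonlinear Gr\"onwall Lemma~\ref{NLGronwall}; choosing the comparison function $g$ there so that $G^{-1}$ grows at most quadratically in $t$ then yields the stated bound $y(t) \leq K_0+K_1t+K_2t^2$. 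The main obstacle is the cubic convective term $\int h_{\ve,x}^3\,dx$: it is superlinear in $y$, so a crude estimate gives blow-up in finite time, and one must carefully trade a portion of it against the dissipation, which is only possible thanks to the spectral gap $1-(a/\pi)^2>0$ afforded by $a<\pi/2$.
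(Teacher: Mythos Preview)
Your energy identity for $\mc{E}$ is correct and matches the paper's (noting $\int h_x^3 = \int h^2 h_{xxx}$ under periodic boundary conditions). The gap is in how you propose to handle the convective remainder $-\tfrac{1}{2}\int h_{\ve,x}^3\,dx$.

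Your route --- rewrite as $-2\int h_\ve h_{\ve,x}h_{\ve,xx}$, control $\|h_{\ve,xx}\|_{L^2}$ by the unweighted $\int(h_{\ve,x}+h_{\ve,xxx})^2$ via Wirtinger, then absorb into the dissipation using $(|h_\ve|^3+\ve)\geq\ve$ --- forces a factor $1/\ve$ into the residual after Young's inequality. What survives is of order $\tfrac{1}{\ve}\|h_\ve\|_{L^\infty}^2\|h_{\ve,x}\|_{L^2}^2$, and with $\|h_\ve\|_{L^\infty}\lesssim \|h_{\ve,x}\|_{L^2}+M$ this is $\sim y^2/\ve$ where $y=\|h_{\ve,x}\|_{L^2}^2$. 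That is a genuinely superlinear right-hand side: in Lemma~\ref{NLGronwall} the comparison function would be $g(y)\sim y^2$, for which $G$ is bounded and $G^{-1}$ blows up in finite time, not quadratically. You cannot ``choose $g$ so that $G^{-1}$ grows at most quadratically'' --- $g$ is dictated by the inequality you have derived, and here it is the wrong one. The spectral gap $1-(|\Omega|/\pi)^2>0$ does not rescue this step; it is needed only to make $\mc{E}$ coercive on $\|h_x\|_{L^2}^2$, not to tame the convective term.

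The paper's device is to keep the convective remainder in the form $-\int h_\ve^2(h_{\ve,x}+h_{\ve,xxx})$ (using $\int h_\ve^2 h_{\ve,x}=0$) and apply a \emph{weighted} Cauchy inequality with weight $(|h_\ve|^3+\ve)^{1/2}$:
\[
-\int h_\ve^2(h_{\ve,x}+h_{\ve,xxx})\,dx \leq \frac{S}{2}\int(|h_\ve|^3+\ve)(h_{\ve,x}+h_{\ve,xxx})^2\,dx + \frac{1}{2S}\int\frac{h_\ve^4}{|h_\ve|^3+\ve}\,dx,
\]
and $h_\ve^4/(|h_\ve|^3+\ve)\leq|h_\ve|$. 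The point is that the $|h_\ve|^3$ weight in the dissipation is exactly what is needed to cancel the nonlinearity of the convective flux, leaving a residual $\tfrac{1}{2S}\int|h_\ve|$ that is only \emph{sublinear} in $y$ (via Cauchy--Schwarz and Poincar\'e it contributes $\sim\sqrt{y}+M$). After integrating in time one gets $y(T)\leq \alpha(T)+K\int_0^T\sqrt{y(t)}\,dt$ with $\alpha$ affine in $T$; now $g(s)=\sqrt{s}$, $G(s)\sim\sqrt{s}$, and $G^{-1}$ is quadratic, which is precisely the origin of the $K_0+K_1t+K_2t^2$ bound. Your argument never reaches an inequality of this shape.
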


		\begin{proof}
			Multiplying \eqref{KerchmanPert} by $-h_\ve - h_{\ve,xx}$, integrating over $Q_T$, integrating by parts, and using the periodic boundary conditions, one obtains
			\begin{equation}\label{EnergyEst1}
				\mc{E}(h_\ve(\cdot,T)) + S\iint_{Q_T} \left( |h_\ve|^3 + \ve \right) \left( h_{\ve,x} + h_{\ve,xxx} \right)^2 \ dx \, dt = \vE(h_{0,\ve}) - \frac{1}{2} \iint_{Q_T} h_\ve^2  h_{\ve,xxx} \ dx \, dt.
			\end{equation}
			This implies
			\begin{align*}
			& 	||h_x(\cdot,T)||_{L^2(\Omega)}^2 + 2S\iint_{Q_T} \left( |h|^3 + \ve \right) \left( h_x  + h_{xxx} \right)^2 \ dx \ dt \\
			& \hspace{1cm} = ||h(\cdot,T)||_{L^2(\Omega)}^2 +  2\mc{E}(h_{0,\ve}) - \iint_{Q_T} h^2 h_{xxx} \ dx \ dt.
			\end{align*}
			Applying the Poincar\'{e} inequality to $h(x,T)$, we obtain
			\begin{align}\label{LTEst1}
				& \left( 1 - \left( \frac{|\Omega|}{\pi} \right)^2 \right) ||h_x(\cdot,T)||_{L^2(\Omega)}^2 + 2S\iint_{Q_T} \left( |h|^3 + \ve \right) \left( h_x  + h_{xxx} \right)^2 \ dx \ dt \\
				& \hspace{1cm}  \leq  2\mc{E}(h_{0,\ve}) + \frac{M_\ve^2}{|\Omega|} - \iint_{Q_T} h^2 h_{xxx} \ dx \ dt. \notag
			\end{align}
			Now, observe that we can bound the integral on the right hand side:
			\begin{align*}
			&	-\iint_{Q_T} h^2h_{xxx} \ dx \ dt \overset{\mathrm{\begin{array}{c} \t{periodic} \\ \t{boundary} \\ \t{conditions} \end{array}}}{=} -\iint_{Q_T} h^2 \left( h_x + h_{xxx} \right) \\
				& \hspace{.1cm} \overset{\mathrm{\begin{array}{c} \t{Cauchy's} \\ \t{inequality} \end{array}}}{\leq} \frac{S}{2} \iint_{Q_T} \left( |h|^3 + \ve \right) \left( h_x + h_{xxx} \right)^2 \ dx \ dt + \frac{1}{2S} \iint_{Q_T} |h| \ dx \ dt.
			\end{align*}
			Therefore, it follows from \eqref{LTEst1} that
			\begin{align}\label{LTEst2}
				 & \left( 1 - \left( \frac{|\Omega|}{\pi} \right)^2 \right) ||h_x(\cdot,T)||_{L^2(\Omega)}^2 + \frac{3S}{2}\iint_{Q_T} \left( |h|^3 + \ve \right) \left( h_x  + h_{xxx} \right)^2 \ dx \ dt \\
				& \hspace{1cm} \leq  2\mc{E}_\ve(0) + \frac{M_\ve^2}{|\Omega|} + \frac{1}{2S} \iint_{Q_T} |h| \ dx \ dt. \notag
			\end{align}
			Again, we can use the Cauchy-Schwarz and Poincar\'{e} inequalities to see that
			\begin{align*}
				\iint_{Q_T} |h| \ dx \ dt &\leq |\Omega|^{1/2} \int_0^T \left( \int_\Omega h^2 \ dx  \right)^{1/2} \ dt \\
				&\leq |\Omega|^{1/2} \int_0^T \left( \left( \frac{|\Omega|}{\pi} \right)^2 \int_\Omega h_x^2 \ dx + \frac{M_\ve^2}{|\Omega|}  \right)^{1/2} \ dt \\
				&\leq \frac{|\Omega|^{3/2}}{\pi} \int_0^T ||h_x(\cdot,t)||_{L^2(\Omega)} \ dt +  M_\ve T.
			\end{align*}
			Applying this bound to \eqref{LTEst2} we have
			\begin{equation*}
				||h_x(\cdot,T)||_{L^2(\Omega)}^2 \leq \alpha(T) + K\int_0^T ||h_x(\cdot,t)||_{L^2(\Omega)} \ dt,
			\end{equation*}
			where 
			\begin{equation*}
				\begin{split}
					\alpha(T) &= \left( 1 - \left( \frac{|\Omega|}{\pi} \right)^2  \right)^{-1}  \left( 2\mc{E}(h_{0,\ve}) + \frac{M_\ve^2}{|\Omega|}  + \frac{M_\ve}{2S} T  \right),  \\
					K &= \left( 1 - \left( \frac{|\Omega|}{\pi} \right)^2  \right)^{-1} \frac{|\Omega|^{3/2}}{2S\pi}.
				\end{split} 
			\end{equation*}	
			An application of Lemma \ref{NLGronwall} completes the proof of \eqref{LongtimeUnifBound1}, with 
			\begin{equation*}
					K_0= \alpha(0), \ \
					K_1 = \alpha'(0) + K\sqrt{\alpha(0)}, \ \
					K_2 = \frac{1}{4}\left( \frac{\alpha'(0)}{\sqrt{\alpha(0)}} + K \right)^2.
			\end{equation*}
		\end{proof}
	
	Application of Poincar\'{e} and Sobolev inequalities immediately implies that for any finite time $T$, we have a priori bound for $||h_\ve||_{L^\infty(Q_T)}$.
	
	\subsection{H\"{o}lder Continuity of $\{ h_\ve \}_{\ve > 0}$}\label{M1Holder}
	
	Let $T < \infty$ be a uniform time of existence for a family of solutions $\{ h_\ve \}_{\ve > 0}$.  Using the uniform boundedness of $||h_\ve||_{H^1(Q_T)}$, an application of Morrey's inequality (\cite{Ev} page 282) implies that $h_{\ve}(\cdot,t)$ are uniformly bounded in  $C^{1/2}(\bar{\Omega})$ for $0 \leq \ve \leq \ve_0$, $0 \leq t \leq T$, i.e. there is a constant $K_3$ such that
	\begin{equation}\label{xHolder}
		|h_\ve(x_1,t) - h_\ve(x_2,t)| \leq K_3|x_1 - x_2|^{1/2},
	\end{equation}
	where the constant $K_3$ is independent of $\ve$.
	
	\begin{Lem}\label{KtHolderLem}
		There is a constant $M < \infty$ so that for every $0 \leq \ve \leq \ve_0$ and $0 \leq t_1 < t_2 \leq T$, $h_\ve$ satisfies
		\begin{equation}\label{KtHolder}
			|h_\ve(x_0,t_1) - h_\ve(x_0,t_2)| \leq M |t_1 - t_2|^{1/8}
		\end{equation}
		for each $x_0 \in \Omega.$
		\begin{proof}
			Suppose that
			\begin{equation}\label{tLower}
				|h_\ve(x_0,t_1) - h_\ve(x_0,t_2)| > M |t_2 - t_1|^{1/8}
			\end{equation}
			for some $x_0 \in \Omega$ and some $0 \leq t_1 < t_2 \leq T$.  We will derive an upper bound for $M$ independent of $\ve$.  Without loss of generality, assume that $h_\ve(x_0,t_2) > h_\ve(x_0,t_1)$.
			
			Following the work in \cite{BF}, we define $\xi_0 \in C_0^\infty$ so that $\xi_0$ is even, $\xi_0(x) = 1$ if $0 \leq x \leq \frac{1}{2}$, $\xi_0(x) = 0$ if $x \geq 1$, and $\xi_0'(x) \leq 0$ for $x \geq 0$.  Setting
			\begin{equation*}
				\xi(x) = \xi_0 \left( \frac{x - x_0}{(M^2/16K_3^2)(t_2 - t_1)^{2\beta}}  \right),
			\end{equation*}
			where $\beta = \frac{1}{8}$.  It follows that
			\begin{equation}\label{xi}
				\xi(x) = 
					\begin{cases}
						0 &\t{ if } |x - x_0| \geq \frac{M^2}{16K_3^2}(t_2 - t_1)^{2\beta} \\
						1 &\t{ if } |x - x_0| \leq \frac{1}{2}\frac{M^2}{16K_3^2}(t_2 - t_1)^{2\beta}.
					\end{cases}
			\end{equation}
			
			We next define $\theta_\delta$ by
			\begin{equation*}
				\theta_\delta(t) = \int_{-\infty}^t \theta_\delta'(s) \ ds,
			\end{equation*}
			where $\theta_\delta'$ is given by
			\begin{equation*}
				\theta_\delta'(t) =
				\begin{cases}
					\frac{1}{2\delta} &\t{ if } |t- t_2| < \delta \\
					-\frac{1}{2\delta} &\t{ if } |t - t_1|< \delta \\
					0 &\t{otherwise}
				\end{cases}
			\end{equation*}
			and $0 < \delta < \frac{t_2 - t_1}{2}.$  It is easy to see that $\theta_\delta$ is Lipschitz continuous and that $|\theta_\delta| \leq 1$.  Furthermore, $\theta_\delta = 0$ near $t = 0$ and $t = T$ provided $\delta$ is small enough.
			
			Setting $\phi(x,t) = \xi(x)\theta_\delta(t),$ it is clear that integration by parts yields
			\begin{equation*}
				\iint_{Q_T} h_\ve \phi_t \ dx \, dt = - \iint_{Q_T} f_\ve \phi_x \ dx \, dt,
			\end{equation*}
			where $f_\ve = \frac{h_\ve^2}{2} + S\left( |h|^3 + \ve \right)\left( h_{\ve,x} + h_{\ve,xxx} \right)$.  Using the definition of $\phi$, we see that
			\begin{equation}\label{TestParts}
				\iint_{Q_T} h_\ve(x,t) \xi(x) \theta_\delta'(t) \ dx \, dt = - \iint_{Q_T} f_\ve(x,t)\xi'(x) \theta_\delta(t) \ dx \, dt.
			\end{equation}
			We first work with the left hand side of \eqref{TestParts}.  Taking the limit as $\delta$ tends to $0$, it is clear that
			\begin{equation}\label{deltalimit}
				\lim_{\delta \to 0} \iint_{Q_T} h_\ve(x,t) \xi(x) \theta_\delta'(t) \ dx \, dt = \int_\Omega \xi(x) \left( h_\ve(x,t_2) - h_\ve(x,t_1) \right) \ dx.
			\end{equation}
			We will estimate \eqref{deltalimit} from below.  Because of \eqref{xi}, it is clear that we must only consider values $x$ such that 
			\begin{equation}\label{xRestrict}
				|x - x_0| \leq \frac{M^2}{16K_3^2}(t_2 - t_1)^{2\beta}. 
			\end{equation}
			Note that for such values of $x$, we have
			\begin{align*}
				& h_\ve(x,t_2) - h_\ve(x,t_1) \\
				& \hspace{.5cm} = \left( h_\ve(x,t_2) - h_\ve(x_0,t_2) \right) + \left( h_\ve(x_0,t_2) - h_\ve(x_0,t_1) \right) + \left( h_\ve(x_0,t_1) - h_\ve(x,t_1) \right) \\
				& \hspace{1cm} \geq -2K_3|x - x_0|^{1/2} + M(t_2 - t_1)^\beta \t{, by \eqref{xHolder} and \eqref{tLower}.} \\
				& \hspace{1.5cm}  \geq \frac{M}{2}(t_2 - t_1)^\beta \t{, by \eqref{xRestrict}}.
			\end{align*}
			If we assume that $\left\{ \xi = 1 \right\} \subset \Omega$, then we have
			\begin{equation*}
				\int_{\Omega} \xi(x) \left( h_\ve(x,t_2) - h_\ve(x,t_2) \right) \ dx \geq \frac{M}{2}(t_2 - t_1)^\beta \frac{M^2}{16K_3^2}(t_2 - t_1)^{2\beta}.
			\end{equation*}
			
			We now work to bound the right hand side of \eqref{TestParts}.  Observe that by Cauchy-Schwarz
			\begin{align*}
				& \left| \iint_{Q_T} f_\ve(x,t) \xi'(x) \theta_\delta(t) \ dx \, dt \right| \leq ||f_\ve||_{L^2(Q_T)} \left( \int_\Omega \xi'(x)^2 \ dx \int_0^T \theta_\delta(t)^2 \ dt  \right)^{1/2} \\\
				& \hspace{.25cm} = ||f_\ve||_{L^2(Q_T)} \left( \int_\Omega \left( \frac{d}{dx} \xi_0 \left( \frac{x-x_0}{(M^2/16K_3^2)(t_2 - t_1)^{2\beta}} \right) \right)^2 \ dx \int_0^T \theta_\delta(t)^2 \ dt \right)^{1/2} \\
				&\hspace{.25cm} = \frac{1}{(M^2/16K_3^2)(t_2 - t_1)^{2\beta}}||f_\ve||_{L^2(Q_T)} \\
				&\hspace{.5cm}  \times \left( \int_\Omega \xi_0' \left( \frac{x-x_0}{(M^2/16K_3^2)(t_2 - t_1)^{2\beta}} \right)^2 \ dx \int_0^T \theta_\delta(t)^2 \ dt \right)^{1/2} \\
				&\hspace{.25cm} = \frac{1}{(M^2/16K_3^2)(t_2 - t_1)^{2\beta}}||f_\ve||_{L^2(Q_{T_\t{loc}})} \\
				& \hspace{.5cm} \times  \left( \int_\Omega \xi_0' \left( \frac{x-x_0}{(M^2/16K_3^2)(t_2 - t_1)^{2\beta}} \right)^2 (t_2 - t_1 + 2\delta) \right)^{1/2} \\
				&\hspace{.25cm}  \leq \frac{1}{(M^2/16K_3^2)(t_2 - t_1)^{2\beta}}||f_\ve||_{L^2(Q_{T_\t{loc}})} \frac{C\sqrt{2}M}{4K_3} (t_2 - t_1 + 2\delta)^{1/2},
			\end{align*}
			where we obtain the last inequality from the support of $\xi'(x)$ and taking 
			\[ C = \sup_{x \in \Omega} \xi_0' \left( \frac{x-x_0}{(M^2/16K_3^2)(t_2 - t_1)^{2\beta}} \right). \]  
			It is easy to see that $||f_\ve||_{L^2(Q_T)}$ is uniformly bounded for $0 < \ve \leq \ve_0$.
			
			Hence, taking $\delta \to 0$ and using the statements we have derived regarding the left and right hand sides of \eqref{TestParts}, we see that
			\begin{equation*}
				\frac{M}{2}(t_2 - t_1)^\beta \frac{M^2}{16K_3^2}(t_2 - t_1)^{2\beta} \leq \frac{1}{(M^2/16K_3^2)(t_2 - t_1)^{2\beta}}||f_\ve||_{L^2(Q_T)} \frac{C\sqrt{2}M}{4K_3}(t_2 - t_1)^\beta (t_2 - t_1)^{1/2}.
			\end{equation*}
			This implies that
			\begin{equation*}
				M \leq \tilde{C}^{1/4},
			\end{equation*}
			where $\tilde{C}$ is a constant independent of $M$ and $\ve$.  This proves the lemma.
		\end{proof}
	\end{Lem}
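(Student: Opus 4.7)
The plan is to argue by contradiction in the style of Bernis--Friedman \cite{BF}: assuming the Hölder estimate \eqref{KtHolder} fails at some point $(x_0,t_1,t_2)$, I will derive a uniform upper bound on the offending constant $M$ by inserting a carefully scaled space--time test function into the weak form of \eqref{KerchmanPert}. The scaling of the cutoff is chosen so that the spatial Hölder estimate \eqref{xHolder} (with $\ve$-independent constant $K_3$, coming from Morrey's inequality applied to the uniform $H^1$-bound from Lemma \ref{KSTEstLem} or Lemma \ref{PertGronwallBound}) combines with the assumed large temporal increment to produce a genuine pointwise lower bound for $h_\ve(x,t_2)-h_\ve(x,t_1)$ on a quantitative neighborhood of $x_0$.

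Concretely, I would take $\phi(x,t)=\xi(x)\theta_\delta(t)$, with $\xi$ a standard bump of width $\sim (M/K_3)^2(t_2-t_1)^{1/4}$ centered at $x_0$ (equal to $1$ on the inner half), and $\theta_\delta$ a Lipschitz approximation of $\mathbf 1_{[t_1,t_2]}$. Writing the equation in conservation form, integration by parts gives
\begin{equation*}
\iint_{Q_T} h_\ve\,\xi\,\theta_\delta'\,dx\,dt=-\iint_{Q_T} f_\ve\,\xi'\,\theta_\delta\,dx\,dt,\qquad f_\ve=\tfrac12 h_\ve^{2}+S(|h_\ve|^3+\ve)(h_{\ve,x}+h_{\ve,xxx}).
\end{equation*}
Sending $\delta\to 0$, the left side becomes $\int\xi(h_\ve(\cdot,t_2)-h_\ve(\cdot,t_1))\,dx$. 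Using \eqref{xHolder} at $x$ and $x_0$ together with the supposed lower bound $h_\ve(x_0,t_2)-h_\ve(x_0,t_1)>M(t_2-t_1)^{1/8}$, the integrand is at least $\tfrac{M}{2}(t_2-t_1)^{1/8}$ on the width chosen, yielding a lower bound of order $M^3 K_3^{-2}(t_2-t_1)^{3/8}$.

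For the right-hand side, Cauchy--Schwarz in $(x,t)$ gives a bound by $\|f_\ve\|_{L^2(Q_T)}\,\|\xi'\|_{L^2}\,(t_2-t_1+2\delta)^{1/2}$; since $\|\xi'\|_{L^2}$ scales as (width)$^{-1/2}$, this is of order $\|f_\ve\|_{L^2(Q_T)}\,K_3 M^{-1}(t_2-t_1)^{3/8}$. Equating the two sides cancels the factor $(t_2-t_1)^{3/8}$ and yields $M^4\lesssim K_3^{4}\|f_\ve\|_{L^2(Q_T)}$, i.e.\ $M\leq\widetilde C^{1/4}$ for a constant independent of $\ve$ and of $(x_0,t_1,t_2)$; this is precisely the exponent $1/8$ one wants, because the chosen space-cutoff width $\sim(t_2-t_1)^{1/4}$ makes both sides scale as $(t_2-t_1)^{3/8}$.

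The main obstacle, and the only genuinely nontrivial ingredient, is the uniform-in-$\ve$ bound $\|f_\ve\|_{L^2(Q_T)}\leq C$. The quadratic term $h_\ve^2/2$ is harmless via the uniform $L^\infty$-control inherited from the $H^1$-estimate. For the bracketed flux, I would expand $(|h_\ve|^3+\ve)(h_{\ve,x}+h_{\ve,xxx})$ and bound $|h_\ve|^3+\ve$ in $L^\infty$, so that it suffices to control $\iint_{Q_T}(|h_\ve|^3+\ve)(h_{\ve,x}+h_{\ve,xxx})^2\,dx\,dt$ uniformly in $\ve$; this is exactly what the Model I energy identity \eqref{V'} (and its time-integrated consequence \eqref{EnergyEst1} used in Lemma \ref{PertGronwallBound}) delivers. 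Once this ingredient is in place, the contradiction closes and the lemma follows with $M=\widetilde C^{1/4}$ independent of $\ve,x_0,t_1,t_2$, as claimed.
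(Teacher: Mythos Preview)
Your proposal is correct and follows essentially the same Bernis--Friedman argument as the paper: the same contradiction setup, the same separated test function $\phi=\xi\theta_\delta$ with spatial bump of width $\sim (M/K_3)^2(t_2-t_1)^{1/4}$, the same lower bound on the left via \eqref{xHolder} plus the assumed temporal increment, and the same Cauchy--Schwarz upper bound on the right in terms of $\|f_\ve\|_{L^2(Q_T)}$, leading to $M\leq\tilde C^{1/4}$. If anything, you are more explicit than the paper about the one nontrivial ingredient, namely the $\ve$-uniform bound on $\|f_\ve\|_{L^2(Q_T)}$ via the energy identity \eqref{V'}/\eqref{EnergyEst1}; the paper simply asserts this is ``easy to see.''
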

	
	Because $h_\ve(\cdot,T) \in C^{1/2}_x(\bar{\Omega})$ and $h_\ve(x,\cdot) \in C^{1/8}_t[0,T_\t{loc}]$ for $x \in \Omega$ and $0 \leq T \leq T_\t{loc}$, \cite{E} (Theorem 9.3 on page 316) implies that $h_\ve$ can be extended as a solution to \eqref{KerchmanPert} on $Q_{T_\t{loc}}$.  Lemmas \ref{KSTEstLem}, \ref{KtHolderLem}, and \eqref{xHolder} imply that $\{ h_\ve \}_{\ve >0}$ is a uniformly bounded, equicontinuous family of functions on $Q_{T_\t{loc}}$.  Due to the Arzel\`{a}-Ascoli lemma, this will allow us to find weak solutions to \eqref{KerchmanAbs} in the next section in a natural sense.  Similarly, in the setting where $|\Omega| < \pi$, Lemmas \ref{PertGronwallBound} and \ref{KtHolderLem} and statement \eqref{xHolder} imply that $\left\{ h_\ve \right\}_{\ve > 0} $ is a uniformly bounded and equicontinuous family of functions on $Q_T$ for any finite time $T$.

	
	\section{Weak Solutions to Model I}\label{M1WeakSolutions}
	We now consider the initial boundary value problem 
	\begin{equation}\label{KerchmanAbs}
	\begin{cases}
	&h_t = - hh_x - S\left[ |h|^3(h_x + h_{xxx}) \right]_x \t{ in $Q_T$}, \\
	&h(x,0) = h_0(x) \in H^1(\Omega), \\
	& \partial_x^j h(-a,\cdot) = \partial_x^j h(a,\cdot) \t{ for $t \in (0,T)$}, \ j = 0,1,2,3.
	\end{cases}
	\end{equation}
	We define a weak solution to \eqref{KerchmanAbs} as follows:
	\begin{Def}\label{KWeakSol}
		Let $h$ be defined on $Q_T$ such that
		\begin{align}
			\label{ContProp}&h \in C_{x,t}^{1/2,1/8}(\overline{Q_T}) \cap L^\infty(0,T; H^1(\Omega)), \\
			\label{L2Prop}&h_t \in L^2(0,T; (H^1(\Omega))^*), \\
			\label{DerivProp}&h \in C_{x,t}^{4,1}(P), \\
			&|h|^{3/2}(h_{xxx} + h_x) \in L^2(P),
		\end{align}
		where $P = \overline{Q_T} \setminus (\{ (h= 0) \} \cup \{ t = 0 \})$.   Suppose that $h$ satisfies \eqref{KerchmanAbs} in the following sense:
		\begin{equation}\label{WeakSolProp}				
			\iint_{Q_T} h_t \phi \ dx \ dt - \iint_P \left( \frac{h^2}{2}  + S|h|^3\left( h_x + h_{xxx} \right) \right) \phi_x \ dx \ dt = 0
		\end{equation}
		for all $\phi \in C^1(Q_T)$ with $\phi(a,\cdot) = \phi(-a,\cdot)$.  Further
		\begin{align}
			\label{Tto0Prop}&h(\cdot,t) \to h(\cdot,0) \text{ pointwise and strongly in $L^2(\Omega)$ as $t \to 0$} \\
			\label{BoundaryProp}&\partial_x^j h(a,t) = \partial_x^j h(-a,t) \text{ for $j = 0,1,2,3$ on $(\partial \Omega \times (0,T)) \setminus \left( \{ h = 0 \} \cup \{ t= 0\} \right)$}.
		\end{align}  
		Then we call $h$ a \emph{weak solution} to the problem \eqref{KerchmanAbs}.
	\end{Def}
	
	Let $T$ be a uniform time of existence for a family of solutions $\{ h_\ve \}_{\ve > 0}$.  Because $\{ h_\ve \}_{\ve > 0}$ is a uniformly bounded and equicontinuous family of functions, then by the Arzel\`{a}-Ascoli lemma there is a subsequence $\ve_k \to 0$ such that
	\begin{equation}\label{SolConv}
		h_{\ve_k} \to h \ \ \ \text{uniformly in } \bar{Q}_{T}.
	\end{equation}
	Henceforth, we refer to this subsequence as $\ve \to 0$.
	
		\begin{Thm}\label{WeakSolThm}
		Let $\Omega$ be a bounded interval in $\R$. Any function $h$ obtained as in \eqref{SolConv} is a weak solution to \eqref{KerchmanAbs} in $Q_{T_{loc}}$, where $T_{loc} >0$ is given in Lemma \ref{KSTEstLem}.  If we further assume that $\Omega \subset \left( -\frac{\pi}{2},\frac{\pi}{2} \right)$, then $h$ is a weak solution to \eqref{KerchmanAbs} in $Q_T$, where $T>0$ can be taken to be arbitrarily large.
		\begin{proof}
			It is clear that \eqref{ContProp} follows by the fact that $h_\ve \to h$ uniformly in $Q_T$.  Now take $\phi \in \Lip \left( Q_T \right)$ such that $\phi = 0$ near $t = 0$ and $t = T$.  Then for each $0 < \ve \leq \ve_0$, we have
			\begin{align*}
				 &0 =  \iint_{Q_T} h_\ve \phi_t \ dx \, dt + \iint_{Q_T} \frac{h_\ve^2}{2} \phi_x \ dx \, dt   \\
				 & \hspace{.2cm} + S\iint_{Q_T} |h_\ve|^3 (h_{\ve,x} + h_{\ve,xxx} ) \phi_x \ dx \, dt + S\ve \iint_{Q_T} (h_{\ve,x} + h_{\ve,xxx}) \phi_x \ dx \, dt  .
			\end{align*}
			By \eqref{LongtimeUnifBound1}, Cauchy's inequality, and the Sobolev inequality, it follows that the expression $\ve^{1/2} ||h_{\ve,x} + h_{\ve,xxx}||_{L^2(Q_T)}$ is uniformly bounded with respect to $\ve$. Then, we see that
			\begin{align*}
				\ve \iint_{Q_T} |(h_{\ve,x} + h_{\ve,xxx}) \phi_x|\ dx \, dt &\leq \ve ||h_{\ve,x} + h_{\ve,xxx}||_{L^2(Q_T)} ||\phi_x||_{L^2(Q_T)}\\
				&\leq C \ve^{1/2},
			\end{align*}
			where $C$ is a constant independent of $\ve$.  Therefore
			\begin{equation*}
				\lim_{\ve \to 0} \ve \iint_{Q_T} (h_{\ve,x} + h_{\ve,xxx}) \phi_x \ dx \, dt = 0,
			\end{equation*}
			
			Note that our a priori estimates imply $H_\ve = (|h_\ve|^3 + \ve)^{1/2}(h_{\ve,x} + h_{\ve,xxx})$ is uniformly bounded in $L^2(Q_T)$, which in turn implies that $H_\ve \rightharpoonup H \in L^2(Q_T)$.  Regularity theory of uniformly parabolic equations and the fact that $h_\ve$ are uniformly H\"{o}lder continuous imply that
			\begin{equation}\label{DerivCon}
				h_{\ve,t}, h_{\ve,x}, h_{\ve,xx}, h_{\ve,xxx}, \text{ and } h_{\ve,xxxx} \text{ are uniformly convergent on any compact subset of $P$,}
			\end{equation}
			and hence \eqref{DerivProp} and \eqref{BoundaryProp}.  Furthermore, \eqref{DerivCon} and $H_\ve \rightharpoonup H$ in $L^2(Q_T)$ tells us that $H = |h|^3(h_x + h_{xxx})$ on $P$.
			
			Setting $f_\ve = \frac{h_\ve^2}{2} + S |h_\ve|^3(h_{\ve,x} + h_{\ve,xxx})$, we have for any $\delta > 0$
			\begin{equation}\label{fConv}
				\lim_{\ve \to 0} \iint_{|h|> \delta} f_\ve\phi_x \ dx \, dt = \iint_{|h| > \delta} f \phi_x \ dx \, dt.
			\end{equation}
			On the other hand, we can choose $0 < \ve \leq \ve_0$ small enough (dependent on $\delta$) so that
			\begin{align*}
				& \left| \iint_{|h| \leq \delta } f_\ve \phi_x \ dx \, dt \right| \leq \iint_{|h|\leq \delta } \frac{h_\ve^2}{2} |\phi_x| \ dx \, dt + S\iint_{|h| \leq \delta} |h_\ve|^3|(h_{\ve,x} + h_{\ve,xxx})\phi_x| \ dx \, dt \\
				& \hspace{0cm} \leq \iint_{|h|\leq \delta } \frac{h_\ve^2}{2} |\phi_x| \ dx \, dt + S\left( \iint_{|h|\leq \delta } |h_\ve|^3 \phi_x^2  dx  dt \right)^{1/2} \left( \iint_{|h| \leq \delta } |h_\ve|^3(h_{\ve,x} + h_{\ve,xxx})^2  dx  dt \right)^{1/2} \\
				& \hspace{2cm} \leq C_S \delta^{3/2},
			\end{align*}
			where $C_S$ is independent of $\delta$.  Combining this fact with \eqref{fConv} implies that
			\begin{equation}\label{fPConv}
				\lim_{\ve \to 0} \iint_{Q_T} f_\ve \phi_x \ dx \, dt = \iint_P f \phi_x \ dx \, dt,
			\end{equation}
			whence \eqref{WeakSolProp} follows.
		\end{proof}
	\end{Thm}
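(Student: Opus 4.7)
The plan is to verify each clause of Definition \ref{KWeakSol} in turn by passing to the limit $\ve_k \to 0$ in the sequence of regularized solutions from \eqref{SolConv}, systematically exploiting the a priori bounds of Sections \ref{M1Preliminaries}--\ref{M1EnergyEstimates}. First I would transfer the regularity properties. The H\"older bound \eqref{ContProp}, $h \in C^{1/2,1/8}_{x,t}(\overline{Q_T})$, is immediate from uniform convergence together with the $\ve$-uniform H\"older estimates \eqref{xHolder} and \eqref{KtHolder}. The $L^\infty(0,T;H^1(\Omega))$ bound is transferred from Lemma \ref{KSTEstLem} (locally in time) or Lemma \ref{PertGronwallBound} (for $\Omega \subset (-\pi/2,\pi/2)$) by weak-$*$ compactness and lower semicontinuity of the $H^1$ norm. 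Regularity on $P$, i.e. \eqref{DerivProp}, comes from classical parabolic Schauder theory applied on compact subsets $K \Subset P$: on such $K$ the uniform H\"older bound and positivity of $h$ give $h_\ve \geqslant c_K > 0$ for small $\ve$, so \eqref{KerchmanPert} is uniformly parabolic on a neighborhood of $K$ with coefficients bounded independently of $\ve$; hence derivatives of $h_\ve$ up to fourth order in $x$ and first in $t$ are uniformly bounded and Arzel\`a-Ascoli yields $C^{4,1}$ convergence on $K$.

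The weighted bound $|h|^{3/2}(h_x + h_{xxx}) \in L^2(P)$ I would obtain from the energy identity underlying \eqref{EnergyEst1}, which gives uniform $L^2(Q_T)$ control of the quantity $H_\ve := (|h_\ve|^3 + \ve)^{1/2}(h_{\ve,x} + h_{\ve,xxx})$. Extracting a weakly convergent subsequence $H_\ve \rightharpoonup H$ in $L^2$ and using the uniform convergence of derivatives on compact subsets of $P$ identifies $H = |h|^{3/2}(h_x + h_{xxx})$ there, giving \eqref{L2Prop} as well once $h_t$ is expressed via the flux. Indeed, testing \eqref{KerchmanPert} against $\psi \in H^1(\Omega)$ and using $\|H_\ve\|_{L^2(Q_T)} \leq C$ together with $\|h_\ve\|_{L^\infty} \leq C$ yields $\|h_{\ve,t}\|_{L^2(0,T;(H^1)^*)} \leq C$, so the claim follows by weak compactness.

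The heart of the matter is the weak formulation \eqref{WeakSolProp}. Given an admissible Lipschitz test function $\phi$ with $\phi(-a,\cdot) = \phi(a,\cdot)$ and $\phi$ vanishing near $t=0, t=T$, I would start from the $\ve$-identity obtained by integrating \eqref{KerchmanPert} against $\phi$, which contains an additional linear term $S\ve\iint(h_{\ve,x} + h_{\ve,xxx})\phi_x\,dx\,dt$. Since $\ve^{1/2}\|h_{\ve,x} + h_{\ve,xxx}\|_{L^2(Q_T)}$ is uniformly bounded (via the splitting $\ve = \ve^{1/2}\cdot(|h_\ve|^3 + \ve)^{1/2}\cdot(|h_\ve|^3+\ve)^{-1/2}$ and the $L^2$ bound on $H_\ve$), this term is $O(\ve^{1/2})$ and vanishes. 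For the nonlinear flux $f_\ve = h_\ve^2/2 + S|h_\ve|^3(h_{\ve,x} + h_{\ve,xxx})$ I would split $Q_T = \{|h|>\delta\} \cup \{|h|\leqslant \delta\}$. On $\{|h|>\delta\}$ the uniform convergence of $h_\ve$ and its first three spatial derivatives, which holds on compact subsets of $P$ by the parabolic regularity above, yields convergence to $\iint f\phi_x$. On $\{|h|\leqslant \delta\}$, Cauchy-Schwarz together with $|h_\ve|^3 = |h_\ve|^{3/2}\cdot|h_\ve|^{3/2}$ and the uniform $L^2$ control of $H_\ve$ bounds the integral by $C\delta^{3/2}$. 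Sending $\delta \to 0$ yields \eqref{WeakSolProp}.

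The remaining clauses \eqref{Tto0Prop} and \eqref{BoundaryProp} follow quickly: the initial-trace statement combines the assumed strong convergence $h_{0,\ve} \to h_0$ in $H^1$ with the $\ve$-uniform $C^{1/8}_t$ modulus from \eqref{KtHolder}, while the boundary condition on $P$ is inherited from the periodicity of $h_\ve$ and its first three derivatives under the compact-subset convergence \eqref{DerivCon}. The main obstacle throughout is the degeneracy of the flux near $\{h=0\}$; the cutoff-$\delta$ decomposition combined with the weighted $L^2$ estimate on $H_\ve$ is the Bernis-Friedman device \cite{BF} tailored to absorb the degeneracy. Finally, the long-time assertion under $\Omega \subset (-\pi/2,\pi/2)$ requires no change of strategy: one simply replaces Lemma \ref{KSTEstLem} by Lemma \ref{PertGronwallBound} so that all a priori bounds hold on $Q_T$ for arbitrarily large $T$, and repeats the argument.
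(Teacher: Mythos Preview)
Your proposal is correct and follows essentially the same route as the paper: the same $\ve$-term elimination via the uniform bound on $\ve^{1/2}\|h_{\ve,x}+h_{\ve,xxx}\|_{L^2}$, the same weak $L^2$ limit of $H_\ve$ identified on $P$ through interior parabolic Schauder estimates, and the same $\{|h|>\delta\}$ versus $\{|h|\leqslant\delta\}$ decomposition of the flux with the Cauchy--Schwarz bound $C\delta^{3/2}$ on the small set. If anything, your outline is more complete than the paper's, since you explicitly address \eqref{L2Prop}, the $L^\infty_t H^1_x$ part of \eqref{ContProp}, and \eqref{Tto0Prop}, which the paper's proof leaves implicit.
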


\section{Non-Negativity of Solutions to Model I}\label{M1Non-Negativity}
Using similar techniques as in \cite{BF}, we can give a non-negativity result for solutions constructed in Section \ref{M1WeakSolutions}:
	\begin{Thm}\label{Non-Neg}		
		Let $h$ be a weak solution to \eqref{KerchmanAbs} as constructed in Theorem \ref{KWeakSol} with $h_0 \geqslant 0$.  Further assume that $\int_\Omega h_0^{-1} \ dx < \infty$.  Then $h \geq 0$.  Furthermore, for each $T \in [0,\hat{T}]$, where $\hat{T}$ is a time of existence as constructed in section \ref{M1WeakSolutions}, the set $E_T = \{ x \in \Omega : \  h(x,T) = 0 \}$ is of measure zero.  Also, $\int_\Omega \frac{dx}{h(x,t)}$ is uniformly bounded.  Finally, if one further assumes that $\Omega \subset \left( -\frac{\pi}{2}, \frac{\pi}{2} \right)$, then $\int_\Omega G_\ve(h_\ve(x,T)) \ dx$ is a monotonically decreasing function on $T$.
		\begin{proof}
		Recall the definitions of $g_\ve$ and $G_\ve$ for $\ve > 0$:
			\begin{equation*}
				g_\ve(s) = -\int_s^A \frac{dr}{|r|^3 + \ve}, \quad G_\ve(s) = -\int_s^A g_\ve(r) \ dr,
			\end{equation*}
		where $A \geqslant \max |h_\ve|,$ which is a finite number by a priori estimates on $h_\ve$.  It follows by definition of $G_\ve$ that for $\ve > 0$ we have
			\begin{align*}
				\int_\Omega G_\ve(h_{0,\ve}(x)) &< C_1 + \int_\Omega h_{0,\ve}^{-1} \ dx \t{, where $C_1$ is independent of $\ve$} \\
				&\leq C_1 + \int_\Omega h_0^{-1} \ dx \t{, as $h_{0,\ve} \geq h_0$} \\
				&< \infty \t{, by hypothesis}
			\end{align*}
		and this bound is clearly independent of $\ve$.
			
		Multiplying \eqref{KerchmanPert} by $g_\ve(t)$ and integrating over $Q_T$, we see that
			\begin{align*}
				 & \int_\Omega G_\ve(h_\ve (x,T)) \ dx - \int_\Omega G_\ve(h_{0,\ve}(x)) \ dx = 	\iint_{Q_T} g_\ve(h) h_t \ dx \, dt \\
				&\hspace{1cm} = -\iint_{Q_T} \left[ \frac{h_\ve^2}{2} + S\left( |h_\ve|^3 + \ve \right)(h_{\ve,x} + h_{\ve,xxx}) \right]_x g_\ve(h) \ dx \, dt \\
				&\hspace{1cm}= -\iint_{Q_T} h_\ve g_\ve(h_\ve) h_{\ve,x} - S(h_{\ve,x} + h_{\ve,xxx}) h_{\ve,x} \ dx \, dt \\
				&\hspace{1cm}= -\iint_{Q_T} h_\ve h_{\ve,x} g_\ve(h_\ve) \ dx \ dt + S\iint_{Q_T} h_{\ve,x}^2 \ dx \ dt - S\iint_{Q_T} h_{\ve,xx}^2 \ dx \ dt,
			\end{align*}
		where the last two equalities follow by integrating by parts and using the fact that
		\[ \frac{\partial}{\partial x} g_\ve(h_\ve) = \frac{h_{\ve,x}}{|h_\ve|^3 + \ve}. \]  Hence, we have
			\begin{align*}
				& \int_\Omega G_\ve(h_\ve(x,T)) \ dx + S\iint_{Q_T} h_{\ve,xx}^2 \ dx \, dt  \\
				& \hspace{1cm} = -\iint_{Q_T} h_\ve g_\ve(h_\ve) h_{\ve,x} \ dx \ dt + S\iint_{Q_T} h_{\ve,x}^2 \ dx \, dt + \int_\Omega G_\ve(h_{0,\ve}(x)) \ dx. 
			\end{align*}
		We can define
			\begin{equation*}
				F_\ve(s) := \int_0^s r g_\ve(r) \ dr.
			\end{equation*}
		Using the fundamental theorem of calculus, it is clear that
			\begin{equation*}
				\int_\Omega h_\ve g_\ve (h_\ve) h_{\ve,x} \ dx = \int_\Omega \left[ F_\ve(h_\ve) \right]_x \ dx = F_\ve(h_\ve) \bigg|_{\partial \Omega}.
			\end{equation*}
		The periodic boundary conditions on $h_\ve$ implies that this integral is zero, and hence
			\begin{equation*}
				\int_{\Omega} G_\ve(h_\ve(x,T)) \ dx + S\iint_{Q_T} h_{\ve,xx}^2 \ dx \ dt = \int_{\Omega} G_\ve(h_{0,\ve}) \ dx + S\iint_{Q_T} h_{\ve,x}^2 \ dx \ dt.
			\end{equation*}
		If we have that $\int_\Omega h_{\ve,x}^2(x,t) \ dx$ is uniformly bounded for $0 \leq T \leq \hat{T}$, then it follows that $\int_\Omega G_\ve(h_\ve(x,T)) \ dx$ and $\iint_{Q_T} h_{\ve,xx}^2 \ dx \ dt$ are bounded for all $0 \leq T \leq \hat{T}$.
			
		If we further assume that $\Omega \subset \left( -\frac{\pi}{2}, \frac{\pi}{2} \right)$, we can apply the Poincar\'{e} inequality to $h_x$ with $\int_\Omega h_x = 0$ by the periodic boundary conditions.  As a result, we see that 
			\begin{equation*}
				\int_\Omega G_\ve(h_\ve(x,T)) \ dx + \left( 1 - \left( \frac{|\Omega|}{\pi} \right)^2 \right) \iint_{Q_T} h_{\ve,xx}^2 \leq \int_\Omega G_\ve(h_{0,\ve}) \ dx.
			\end{equation*}
		 From this inequality, we see that $\int_{\Omega} G_\ve(h_\ve(x,T)) \ dx$ is a decreasing function on $T$.
		
		Suppose, toward a contradiction, that there a point $(x_0,t_0) \in Q_{\hat{T}}$ so that $h(x_0,t_0) < 0$.  Because $h_\ve \to h$ uniformly in $Q_{\hat{T}}$, there is $\delta > 0$ and $\ve_0 > 0$ so that for every $0 < \ve \leq \ve_0$ and every $x \in \Omega$ satisfying $|x - x_0| < \delta$, we have $h_\ve(x,t_0) < -\delta$.  However, this implies
		\begin{align*}
			G_\ve(h_\ve(x,t_0)) = - \int_{h_\ve(x,t_0)}^A g_\ve(s) \ ds \geq - \int_{-\delta}^0 g_\ve(s) \ ds.
		\end{align*}
		Note that this lower bound tends to $-\int_{-\delta}^0 g_0(s) \ ds$ as $\ve \to 0$.  However, we have that $g_0(s) = \infty$ for $s < 0$, so the integral on the right is infinite.  This implies that
		\begin{equation*}
			\lim_{\ve \to 0} \int_\Omega G_\ve(h_\ve(\cdot,T)) \ dx = \infty,
		\end{equation*}
		which is a contradiction.  Hence, $h \geq 0$ in $Q_T$.
		
		Now, suppose toward a contradiction that there is a $t_0$ in $[0,\hat{T}]$ so that $\t{meas}(E_{t_0}) > 0$.  Then because $h_\ve \to h$ uniformly, there is a modulus of continuity $\sigma(\ve) > 0$ so that $h_\ve(x,t_0) < \sigma(\ve)$ for $x \in E_{t_0}$.  This implies that for $x \in E_{t_0}$ and $\delta > 0$, we have
		\begin{align*}
			G_\ve(h_\ve(x,t_0)) &= - \int_{h_\ve(x,t_0)}^A g_\ve(s) \ ds  \geq - \int_{\sigma(\ve)}^A g_\ve(s) \ ds \\
			&\geq - \int_\delta^A g_\ve(s) \ ds \to - \int_\delta^A g_0(s) \ ds
		\end{align*}
		provided that $\ve$ is taken small enough so that $\sigma(\ve) < \delta$.  It is also easy to show that
		\begin{equation*}
			-\int_\delta^A g_0(s) \ ds \geq c\delta^{-1},
		\end{equation*}
		where $A \geqslant \max |h_\ve|$, which is uniformly bounded for $0 \leq T \leq \hat{T}$.  This bound implies
		\begin{equation*}
			\int_\Omega G_\ve(h_\ve(x,t_0)) \ dx \geq c\delta^{-1} \text{meas}(E_{t_0})
		\end{equation*}
		which tends to infinty as $\delta$ (and hence $\ve$) go to zero.  This is a contradiction.
		
		Finally, note that by definition of $G_0(s)$, we have that for $(x,t)$ such that 
		\[
		h(x,t) > 0, \ \lim_{\ve \to 0} G_\ve(h_\ve(x,t)) = G_0(h(x,t)). \]  
		Because $E_t$ has measure zero for every $0 \leq t \leq \hat{T}$, it follows that this limit is valid for almost all $x$ in $\Omega$.  Then observe that uniform convergence of the integrand for positive $s$ yields
		\begin{align*}
			G_0(s) &= \lim_{\ve \to 0} G_\ve(s)	= \lim_{\ve \to 0} \int_s^A \int_r^A \frac{1}{\rho^3 + \ve} \ d\rho dr \\
			&= \int_s^A \int_r^A \frac{1}{\rho^3} \ d\rho dr = \frac{1}{2s} - \frac{1}{A} + \frac{s}{2A^2}.
		\end{align*}
		In particular, for $h(x,t) > 0$ we have
		\begin{equation*}
			G_0(h(x,t)) = \frac{1}{h(x,t)} - \frac{1}{A} + \frac{h(x,t)}{2A^2}.
		\end{equation*}
		Integrating over $\Omega$ yields
		\begin{equation*}
			\int_\Omega \frac{dx}{h(x,t)}  = \int_\Omega G_0(h(x,t)) \ dx + \frac{|\Omega|}{A} - \frac{M}{2A^2}.
		\end{equation*}
		Finally, an application of Fatou's lemma and the fact that the measure of $E_{T}$ is zero for each $T \in [0, \hat{T}]$ implies that $\int_\Omega \frac{dx}{h(x,t)}$ is uniformly bounded.
		\end{proof}
	\end{Thm}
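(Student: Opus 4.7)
The plan is to test the regularized equation \eqref{KerchmanPert} against $g_\ve(h_\ve)$, in the spirit of \cite{BF}, thereby generating a dissipation identity for the entropy $\int_\Omega G_\ve(h_\ve(\cdot,T))\,dx$. The key algebraic cancellation comes from $\dx_x g_\ve(h_\ve) = h_{\ve,x}/(|h_\ve|^3+\ve)$, which neutralizes the surface-tension flux after one integration by parts and produces the dissipation $S\iint h_{\ve,xx}^2$, while leaving $S\iint h_{\ve,x}^2$ and a convection contribution. The convection term is itself a perfect spatial derivative,
\begin{equation*}
\iint_{Q_T} h_\ve\, h_{\ve,x}\, g_\ve(h_\ve)\,dx\,dt = \iint_{Q_T} [F_\ve(h_\ve)]_x\,dx\,dt, \qquad F_\ve(s) = \int_0^s r\, g_\ve(r)\,dr,
\end{equation*}
so it vanishes by periodicity, yielding the clean entropy identity
\begin{equation*}
\int_\Omega G_\ve(h_\ve(x,T))\,dx + S\iint_{Q_T} h_{\ve,xx}^2\,dx\,dt = \int_\Omega G_\ve(h_{0,\ve})\,dx + S\iint_{Q_T} h_{\ve,x}^2\,dx\,dt.
\end{equation*}

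Next I would control the right-hand side uniformly in $\ve$. The hypothesis $\int_\Omega h_0^{-1}\,dx < \infty$, combined with $h_{0,\ve} \to h_0$ and $h_{0,\ve} \geq h_0$, gives a uniform bound on the initial entropy, while $\iint h_{\ve,x}^2$ is controlled by Lemmas \ref{KSTEstLem} and \ref{PertGronwallBound}. When $\Omega \subset \left(-\frac{\pi}{2},\frac{\pi}{2}\right)$, the periodic boundary conditions force $\int_\Omega h_{\ve,x}\,dx = 0$, so the Poincar\'e inequality absorbs $S\iint h_{\ve,x}^2$ into the left-hand coefficient $S(1-(|\Omega|/\pi)^2)\iint h_{\ve,xx}^2$, yielding the asserted monotonic decay of $T \mapsto \int_\Omega G_\ve(h_\ve(\cdot,T))\,dx$.

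With a uniform-in-$\ve$ bound on the entropy in hand, the sign and measure conclusions follow by contradiction arguments tailored to the fact that $g_0(s) = +\infty$ for $s \leq 0$. If $h(x_0,t_0) < 0$, uniform convergence $h_\ve \to h$ supplies a neighborhood on which $h_\ve(\cdot,t_0) < -\delta$, whereupon $G_\ve(h_\ve)$ blows up on a set of positive measure, contradicting the uniform entropy bound. An analogous argument, using a modulus of continuity $\sigma(\ve) \to 0$ to control how close $h_\ve$ can be to zero on $E_T$, forces $\mathrm{meas}(E_T)=0$. The main obstacle I foresee is the consistent choice of the auxiliary parameter $A$ appearing in the definitions \eqref{g-eps} and \eqref{G-eps}: it must dominate $\|h_\ve\|_{L^\infty(Q_T)}$ uniformly in $\ve$ for the integrals defining $g_\ve$ and $G_\ve$ to behave correctly, which requires the $L^\infty$ control coming from the $H^1$ estimates via Sobolev embedding.

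For the final assertion on $\int dx/h$, I would observe that on the full-measure set $\{h>0\}$ the pointwise limit $\lim_{\ve\to 0} G_\ve(h_\ve(x,t)) = G_0(h(x,t))$ holds by dominated convergence applied to the defining integrals, and $G_0(s)$ is algebraically equivalent to $1/s$ modulo constants depending only on $A$, $|\Omega|$, and $M$. An application of Fatou's lemma, using the now-established $\mathrm{meas}(E_t) = 0$, then converts the uniform entropy bound into the desired uniform bound on $\int_\Omega dx/h(x,t)$, completing the proof.
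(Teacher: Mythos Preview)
Your proposal is correct and follows essentially the same approach as the paper: test \eqref{KerchmanPert} with $g_\ve(h_\ve)$, exploit $\partial_x g_\ve(h_\ve)=h_{\ve,x}/(|h_\ve|^3+\ve)$ to obtain the entropy identity with the convection term eliminated via the primitive $F_\ve$, then use Poincar\'e for monotonicity and run the two contradiction arguments followed by Fatou's lemma. Your remark about choosing $A$ uniformly in $\ve$ via the $H^1$ bounds is exactly how the paper handles it.
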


	The non-negativity result gives a proof of the following theorem:
	\begin{Thm}\label{KerchWeakSol}
		Any function as obtained in \eqref{SolConv} is a weak solution to \eqref{Kerchman}.
	\end{Thm}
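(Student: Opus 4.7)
The plan is to observe that Theorem \ref{KerchWeakSol} is almost a tautological consequence of the two preceding theorems: Theorem \ref{WeakSolThm} produces a weak solution $h$ of the absolute-value-regularized problem \eqref{KerchmanAbs}, and Theorem \ref{Non-Neg} shows that this $h$ is non-negative on $Q_T$. Since \eqref{Kerchman} and \eqref{KerchmanAbs} differ only in $h^3$ versus $|h|^3$, once we know $h\geqslant 0$ almost everywhere these two flux expressions agree pointwise, and the weak formulation for \eqref{KerchmanAbs} becomes the weak formulation for \eqref{Kerchman}. So the proof is essentially a one-line substitution after invoking the previous results.

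More concretely, I would proceed as follows. First, recall from Theorem \ref{WeakSolThm} that $h$ satisfies the regularity conditions \eqref{ContProp}--\eqref{BoundaryProp} of Definition \ref{KWeakSol}, with the integral identity
\begin{equation*}
\iint_{Q_T} h_t \phi \, dx\, dt - \iint_P \left( \tfrac{h^2}{2} + S|h|^3 (h_x + h_{xxx}) \right) \phi_x \, dx\, dt = 0
\end{equation*}
holding for all admissible test functions $\phi$. Next, invoke Theorem \ref{Non-Neg}, which asserts $h\geqslant 0$ on $Q_T$ and that the vanishing set $E_t=\{x\in\Omega:h(x,t)=0\}$ has measure zero for each $t\in[0,\hat{T}]$. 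On the complement $P$, by definition $h>0$, so $|h|^3 = h^3$ pointwise on the domain of integration. Substituting this identity into the weak formulation immediately yields
\begin{equation*}
\iint_{Q_T} h_t \phi \, dx\, dt - \iint_P \left( \tfrac{h^2}{2} + S h^3 (h_x + h_{xxx}) \right) \phi_x \, dx\, dt = 0,
\end{equation*}
which is exactly the weak formulation of \eqref{Kerchman}.

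Finally, I would note that the regularity, initial-value, and boundary conditions required of a weak solution are identical for the two problems \eqref{Kerchman} and \eqref{KerchmanAbs}, so those carry over verbatim from Theorem \ref{WeakSolThm}. There is no genuine obstacle here; the only subtlety worth remarking on is that replacing $|h|^3$ by $h^3$ is legitimate precisely because the set $\{h=0\}$ has measure zero (Theorem \ref{Non-Neg}), so the flux $h^3(h_x+h_{xxx})$ is well-defined and finite in $L^2(P)$ with the same bound as $|h|^3(h_x+h_{xxx})$. All of the hard analytical work---the equicontinuity, passage to the limit in the nonlinear flux, and non-negativity via the $G_\ve$ functional---has already been carried out, and the present theorem simply collects these facts.
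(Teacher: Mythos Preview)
Your proposal is correct and follows exactly the paper's approach: the paper itself simply states that ``the non-negativity result gives a proof of the following theorem,'' and you have spelled out precisely that reasoning---Theorem~\ref{WeakSolThm} gives a weak solution of \eqref{KerchmanAbs}, Theorem~\ref{Non-Neg} gives $h\geqslant 0$, and on $P$ (where $h\neq 0$) this forces $h>0$ so that $|h|^3=h^3$, turning the weak formulation of \eqref{KerchmanAbs} into that of \eqref{Kerchman}.
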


\section{Model II}\label{M2}

\subsection{Local in Time Theory}

We now discuss the model given by \eqref{Frenkel}.  As with Model I, \eqref{Frenkel} is degenerate if $h$ vanishes, so we must regularize the problem by analyzing
\begin{equation}\label{FrenkelPert}
\begin{cases}
&h_{\ve,t} =  -2h_\ve^2 h_{\ve,x} - S \left[ \left( |h_\ve|^3 + \ve \right)(h_{\ve,x} + h_{\ve,xxx}) \right]_x = 0 \t{ in $Q_T$}, \\
&h_\ve(x,0) = h_{0,\ve}(x) \in C^{4 + \gamma}(\Omega) \ \t{for some } \gamma \in (0,1), \\
& \partial_x^j h_\ve(-a,\cdot) = \partial_x^j h_\ve(a,\cdot) \text{ for $t \in (0,T), \ j = 0,1,2,3$}.
\end{cases}
\end{equation}
One can prove  local in time energy identities and estimates for Model I that are essentially identical to those proved for Model I.  Mirroring the work done in section \ref{M1EnergyEstimates} with \eqref{FrenkelPert}, we prove uniform a priori control of norms of $h_\ve$ in $H^1(\Omega)$, $L^\infty(\Omega)$, $C^{1/2}_x(\Omega)$ and $C^{1/8}_t[0,T_{\t{loc}}]$.  Theorem 6.3 \cite{E} tells us that for each $\ve > 0$ there is a solution $h_\ve$ to \eqref{FrenkelPert} on $Q_{\tau_\ve}$, where $\tau_\ve > 0$.  The a priori control listed above allows us to apply Theorem 9.3 (p. 316) and Corollary 2 (p. 213) \cite{E} in order to extend each solution $h_\ve$ to $Q_{T_\t{loc}}$.  

As in section \ref{M1WeakSolutions}, we then use the uniform boundedness and H\"{o}lder continuity in order to apply the Arzel\`{a}-Ascoli lemma as we take $\ve$ to zero.  Then writing the problem
\begin{equation}\label{FrenkelAbs}
\begin{cases}
&h_t = -2h^2h_x - S \left[ |h|^3(h_x + h_{xxx}) \right]_x = 0 \t{ in $Q_{T_\t{loc}}$}, \\
&h(x,0) = h_0(x) \in H^1(\Omega), \\
& \partial_x^j h(-a,\cdot) = \partial_x^j h(a,\cdot) \ \text{for $t \in (0,T_\t{loc}), \ j = 0,1,2,3$},
\end{cases}
\end{equation}
and a definition comparable to Definition \ref{KWeakSol}, we prove that the limit as $\ve$ to zero (along a subsequence) satisfies such a definition.  Finally, we move forward to prove that this limit is also non-negative as in section \ref{M1Non-Negativity}, which proves that it is a weak solution to \eqref{Frenkel}.

\subsection{Global in Time Estimates on $\Omega \subset \left( -\frac{\pi}{2},\frac{\pi}{2} \right) $}\label{M2Global}

In this section, we assume that $\Omega \subset \left( -\frac{\pi}{2}, \frac{\pi}{2} \right)$. We use non-negativity of solutions to \eqref{Frenkel} and \eqref{FrenkelPert} in order to write them, respectively, as
\begin{equation}\label{FrenkelGlobalPos}
\begin{cases}
&h_t + S[ h^3(h + h_{xx} + \frac{2}{3S}x)_x]_x = 0 \text{ in $Q_T$}, \\
&h(x,0) = h_0(x) \in H^1(\Omega), \\
&\partial_x^j h(-a,\cdot) = \partial_x^j h(a,\cdot) \text{ for $t \in (0,T), \ j = 0,1,2,3$}, \\
& \frac{2}{3S}x \left( h + h_{xx} + \frac{2}{3S}x \right)_x \Big|_{-a}^a \equiv 0 ,
\end{cases}
\end{equation}
and
\begin{equation}\label{FrenkelGlobalPosPert}
\begin{cases}
&h_{\ve,t} + S[ (h_\ve^3 + \ve)(h_\ve + h_{\ve,xx} + \frac{2}{3S}x)_x]_x = 0 \text{ in $Q_T$},  \\
&h_\ve(x,0) = h_{0,\ve}(x) \in C^{4 + \gamma}(\Omega), \\
&\partial_x^j h_\ve(-a,\cdot) = \partial_x^j h_\ve(a,\cdot) \text{ for $t \in (0,T), \ j = 0,1,2,3$}, \\
& \frac{2}{3S}x \left( h_\ve + h_{\ve,xx} + \frac{2}{3S}x \right)_x \Big|_{-a}^a \equiv 0 ,
\end{cases}
\end{equation}
where the boundary conditions have been added. Note that it follows by a similar argument as in section \ref{M1Non-Negativity} that for sufficiently small $\ve > 0$, we must have $h_\ve \geq 0$ on $Q_{T_\ve}$, legitimizing \eqref{FrenkelGlobalPosPert}.  Now, we provide a uniform $H^1(\Omega)$ bound on $h_\ve(\cdot,T)$, independent of $\ve>0$ and $T> 0$.

\begin{Lem}\label{FGlobalEst}
	Suppose $h_\ve$ is a solution to \eqref{FrenkelGlobalPosPert}.  Then $||h_\ve(\cdot,T)||_{H^1(\Omega)}$ is uniformly bounded for all $T > 0$ and $\ve > 0$ sufficiently small.
	\begin{proof}
		Suppose $h := h_\ve$ is a solution to \eqref{FrenkelGlobalPosPert}. Then multiplying \eqref{FrenkelGlobalPosPert} by $\left(h + h_{xx} + \frac{2}{3S}x \right)$ and integrating over $\Omega$ yields
		\begin{equation*}
		0 = \int_\Omega \left[ hh_t+ h_{xx}h_t + \frac{2}{3S}xh_t + S\left[ \left( h^3 + \ve \right) \left( h + h_{xx} + \frac{2}{3S}x  \right)_x \right]_x \left( h + h_{xx} + \frac{2}{3S}x \right) \right] dx.
		\end{equation*}
		Integrating by parts and using the boundary conditions prescribed in \eqref{FrenkelGlobalPosPert}, we obtain
		\begin{equation*}
		0 = \frac{1}{2} \frac{d}{dt}  \int_\Omega \left( h_x^2 -h^2  - \frac{4}{3S} x h \right) dx + S \int_\Omega \left( h^3 + \ve \right) \left( h + h_{xx} + \frac{2}{3S}x \right)_x^2  dx .
		\end{equation*}
		Integrating in time from $0$ to $T$ and applying the fundamental theorem of calculus, it follows that
		\begin{equation}\label{GlobalEnergyIneq1}
		\tilde{\mc{E}}(h(\cdot,T)) + S\iint_{Q_T} \left( h^3 + \ve \right) \left[ \left( h + h_{xx} + \frac{2}{3S}x \right)_x \right]^2 dx = \tilde{\mc{E}}(h_{0\ve}),
		\end{equation}
		where $\tilde{\mc{E}}(h) := \frac{1}{2}\int_\Omega (h_x^2 - h^2 - \frac{4}{3S}x h) \ dx$.  From \eqref{GlobalEnergyIneq1}, it follows that
		\begin{equation*}
		\int_\Omega h_x^2(x,T) \ dx \leq 2\tilde{\mc{E}}(h_{0,\ve}) + \int_\Omega h^2(x,t) \ dx + \frac{4}{3S} \int_\Omega x h(x,t) \ dx.
		\end{equation*}
		Using integration parts with periodic boundary conditions and applying the Cauchy-Schwarz inequality to the right-hand integral, one obtains
		\begin{equation}\label{GlobalEnergyIneq2}
		\int_\Omega h_x^2 (x,T) \ dx \leq 2 \tilde{\mc{E}}(h_{0,\ve}) + \int_\Omega h^2(x,T) \ dx + \frac{2}{3S} \left( \int_\Omega x^4 \ dx \right)^{1/2} \left( \int_\Omega h^2 \ dx \right)^{1/2}.
		\end{equation}
		Recalling the Poincar\'{e} inequality, we have
		\begin{equation}\label{Poincare}
		\int_\Omega h^2 \ dx \leq \left( \frac{|\Omega|}{\pi} \right)^2 \int_\Omega h_x^2 \ dx + \frac{M_\ve^2}{|\Omega|}.
		\end{equation}
		Applying \eqref{Poincare} to \eqref{GlobalEnergyIneq2} and integrating, we find that
		\begin{equation}\label{GlobalEnergyIneq3}
		\left[ 1 - \left( \frac{|\Omega|}{\pi} \right)^2 \right] \int_\Omega h_x^2(x,T) \leq 2\tilde{\mc{E}}(h_{0,\ve}) + \frac{M_\ve^2}{|\Omega|} + \frac{2}{3S} \left( \frac{2a^5}{5} \right)^{1/2} \left( \int_\Omega h_x^2(x,T) \ dx \right)^{1/2}.
		\end{equation}
		An application of Cauchy's inequality yields
		\begin{equation*}
		\left[ 1 - \left( \frac{|\Omega|}{\pi} \right)^2 \right] \int_\Omega h_x^2(x,T) \ dx \leq \delta \int_\Omega h_x^2(x,T) \ dx +2\tilde{\mc{E}}(h_{0,\ve}) + \frac{M_\ve^2}{|\Omega|} + C(\delta),
		\end{equation*}
		where we can choose $\delta = \frac{1}{2}\left[ 1 - \left( \frac{|\Omega|}{\pi} \right)^2 \right] > 0$.  It follows that
		\begin{equation}\label{GlobalEnergyIneq4}
		\int_\Omega h_x^2(x,T) \ dx \leq \delta^{-1} \left[ 2\tilde{\mc{E}}(h_{0,\ve}) + \frac{M_\ve^2}{|\Omega|} + C(\delta) \right] =: A,
		\end{equation}
		i.e. $\int_\Omega h_{\ve,x}^2(x,T) \ dx$ is uniformly bounded, independent of $T$ and $\ve$.  The Poincar\'{e} inequality again implies that $\int_\Omega h_\ve^2(x,T) \ dx$ is uniformly bounded so that $||h_\ve(\cdot,T)||_{H^1(\Omega)}$ is uniformly bounded.  The Sobolev embedding theorem yields a uniform bound on $||h_\ve||_{L^\infty(Q_\infty)},$ where $Q_\infty = \Omega \times (0, \infty)$.
	\end{proof}
\end{Lem}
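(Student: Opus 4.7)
The plan is to exploit the pressure-gradient form of \eqref{FrenkelGlobalPosPert}, in which the right-hand side reads $-S[(h_\ve^3+\ve)\,p_{\ve,x}]_x$ with the modified pressure $p_\ve := h_\ve + h_{\ve,xx} + \frac{2}{3S}x$. The natural test function is $p_\ve$ itself, since this produces a nonnegative Rayleigh-type dissipation together with a modified energy identity that I can close via the sharp Poincar\'e inequality.

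First, I multiply \eqref{FrenkelGlobalPosPert} by $p_\ve$ and integrate over $\Omega$. The three components of $p_\ve$ each contribute a time derivative on the left: $\int_\Omega h_\ve h_{\ve,t}\,dx = \tfrac{d}{dt}\tfrac12||h_\ve||_{L^2}^2$; $\int_\Omega h_{\ve,xx}h_{\ve,t}\,dx = -\tfrac{d}{dt}\tfrac12||h_{\ve,x}||_{L^2}^2$ after one integration by parts, with the boundary term vanishing by periodicity; and $\tfrac{2}{3S}\int_\Omega x\,h_{\ve,t}\,dx = \tfrac{d}{dt}\tfrac{2}{3S}\int_\Omega xh_\ve\,dx$. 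Setting $\tilde{\mc{E}}(h) := \tfrac12\int_\Omega(h_x^2 - h^2 - \tfrac{4}{3S}xh)\,dx$, these collectively give $-\tfrac{d}{dt}\tilde{\mc{E}}(h_\ve)$. On the right, integration by parts produces the dissipation $S\int_\Omega(h_\ve^3+\ve)p_{\ve,x}^2\,dx$ together with a boundary term $-S[(h_\ve^3+\ve)p_{\ve,x}\,p_\ve]_{-a}^a$; the periodic components of $p_\ve$ cancel the corresponding boundary contributions, while the non-periodic linear piece $\tfrac{2}{3S}x$ is killed precisely by the stipulated boundary condition $\tfrac{2}{3S}x\,p_{\ve,x}|_{-a}^a\equiv 0$. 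Integrating in time then yields the energy identity $\tilde{\mc{E}}(h_\ve(\cdot,T)) + S\iint_{Q_T}(h_\ve^3+\ve)p_{\ve,x}^2\,dx\,dt = \tilde{\mc{E}}(h_{0,\ve})$.

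Discarding the nonnegative dissipation and rearranging gives $||h_{\ve,x}(\cdot,T)||_{L^2}^2 \leq 2\tilde{\mc{E}}(h_{0,\ve}) + ||h_\ve(\cdot,T)||_{L^2}^2 + \tfrac{4}{3S}\int_\Omega xh_\ve(x,T)\,dx$. The hypothesis $\Omega\subset(-\tfrac{\pi}{2},\tfrac{\pi}{2})$ enters through the sharp Poincar\'e inequality $||h_\ve||_{L^2}^2 \leq (|\Omega|/\pi)^2||h_{\ve,x}||_{L^2}^2 + M_\ve^2/|\Omega|$, valid because the mass $\int_\Omega h_\ve\,dx = M_\ve \leq M$ is conserved; since $(|\Omega|/\pi)^2 < 1$, the $||h_\ve||_{L^2}^2$ term can be absorbed onto the left. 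For the linear correction $\int_\Omega xh_\ve\,dx$, I apply Cauchy-Schwarz and then Young's inequality with a small parameter $\delta = \tfrac12(1 - (|\Omega|/\pi)^2)$, absorbing a further fraction of $||h_{\ve,x}||_{L^2}^2$. The leftover constants depend only on $|\Omega|$, $S$, $M$, and the initial energy, producing a bound on $||h_{\ve,x}(\cdot,T)||_{L^2}$ uniform in both $T$ and $\ve$. A second application of Poincar\'e upgrades this to a full $H^1(\Omega)$ bound, and the one-dimensional embedding $H^1\hookrightarrow L^\infty$ yields the announced uniform $L^\infty(\Omega \times (0,\infty))$ control.

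The main obstacle is conceptual rather than technical: the gravity-driven convection $-2h^2 h_x = -\tfrac23(h^3)_x$ is not in gradient form relative to any sign-definite functional, so absorbing it into a pressure gradient forces the introduction of the non-periodic linear term $\tfrac{2}{3S}x$. That correction simultaneously spoils positive-definiteness of $\tilde{\mc{E}}$, breaks the natural periodic cancellation of the boundary term in the dissipation, and introduces the sign-indefinite contribution $\int_\Omega xh_\ve\,dx$ that must be absorbed. The stipulated boundary condition addresses the boundary obstruction, while the smallness $|\Omega| < \pi$ combined with the sharp Poincar\'e constant is exactly what rescues the absorption argument for the remaining indefinite terms.
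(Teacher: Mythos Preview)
Your proposal is correct and follows essentially the same route as the paper: multiply by the modified pressure $p_\ve=h_\ve+h_{\ve,xx}+\tfrac{2}{3S}x$, derive the energy identity for $\tilde{\mc E}$, drop the nonnegative dissipation, and close via the sharp Poincar\'e inequality together with Young's inequality. The only cosmetic difference is in the handling of $\int_\Omega x h_\ve\,dx$: the paper first integrates by parts to rewrite it as $-\tfrac12\int_\Omega x^2 h_{\ve,x}\,dx$ before Cauchy--Schwarz (producing the factor $(\int x^4)^{1/2}$), whereas you apply Cauchy--Schwarz directly; either variant yields a term of the form $C\|h_{\ve,x}\|_{L^2}$ plus constants and is absorbed identically.
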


One can use the arguments in section \ref{M1Holder} to show that $h_\ve \in C_{x,t}^{1/2,1/8}(Q_\infty)$ uniformly.  Again, it follows by the arguments in sections \ref{M1WeakSolutions} and \ref{M1Non-Negativity} that taking $\ve$ to zero yields a non-negative weak solution to \eqref{FrenkelGlobalPos} in $Q_\infty$.

\section{Extending the Results and Future Work}\label{FutureWork}

One can also consider a general drift term of the form $(|h|^{\lambda-1}h)_x.$  Using the same arguments as in Section \ref{M1ShortTimeEstimates}, one can obtain local existence theory for any $\lambda  \geq 1$.  Furthermore, one can realize a global existence theory for $1 \leq \lambda \leq \frac{5}{2}$ on domains $\Omega \subset \left( -\frac{\pi}{2},\frac{\pi}{2} \right).$  It is for this reason that we require the nonlinear boundary condition in Section \ref{M2Global}.  However, in this paper, we only explore models corresponding to concrete physical phenomena.

An immediate extension of the results for Model I and Model II is local in time existence of weak solutions to the problem represented by the mixed model:
\begin{equation}\label{Mixed}
\begin{cases}
&h_t = - \alpha hh_x - (1- \alpha)h^2h_x - S\left[ h^3(h_x + h_{xxx}) \right]_x \t{ in $Q_T$},\\
&h(x,0) = h_0(x) \in H^1(\Omega), \\
& \partial_x^j h(-a,\cdot) = \partial_x^j h(a,\cdot) \t{ for $t \in (0,T)$}, \ j = 0,1,2,3, 
\end{cases}
\end{equation}
where $\alpha \in [0,1]$.  How to apply the local in time theory is very straightforward.  Similarly, the work of section \ref{M1LongTimeEstimates} should provide long-time estimates for a regularization of \eqref{Mixed}.  Following the steps set forth in sections \ref{M1WeakSolutions} and \ref{M1Non-Negativity} provide the existence of weak solutions to \eqref{Mixed}.

As discussed in the introduction, both Model I and Model II are limits of long-wave models discussed, from both the experimental and numerical standpoints, in \cite{camassa2012ring,ogrosky2013modeling,COO,camassa2017viscous,CMOV}.  One of such models is given below
	\begin{equation}\label{CMOV-Rcoord}
		\begin{cases}
			& \mu R_t = \rho g f_1(R;b)R_x + \frac{\gamma}{16R} \left[ f_2(R;b)(R_x + R^2 R_{xxx}) \right]_x \t{, in $Q_T = \Omega \times (0,T)$} \\
			& R(x,0) = R_0(x) \in H^1(\Omega) \\
			& \partial_x^j R(a,t) = \partial_x^j R(-a,t) \t{, for $t \in (0,T)$ and $j = 0,1,2,3,$},
		\end{cases}
	\end{equation}
where $\mu, \rho, g$ are all parameters of interest, $b$ is the radius of the cylinder, and $f_1$ and $f_2$ are functions given by
	\begin{align}
		f_1(R;b) &= \frac{1}{2} \left[ R^2 - b^2 - 2R^2 \log \left( \frac{R}{b} \right) \right] \\
		f_2(R;b) &= -\frac{b^4}{R^2} + 4b^2 - 3R^2 + 4R^2 \log \left( \frac{R}{b} \right).
	\end{align}
The next step is to apply the local in time energy methods used in section \ref{M1EnergyEstimates} to the corresponding long-wave models.  Because the degeneracies in the models in \cite{CMOV} are more complicated than the simply polynomials in \eqref{Kerchman} and \eqref{Frenkel}, the regularizations must be defined more meticulously. Defining weak solutions appropriately and adapting the non-negativity arguments of section \ref{M1Non-Negativity} will demonstrate the existence of weak solutions to the models.

\vspace{.5cm}

\noindent{\bf Acknowledgements:}  The authors thank Roberto Camassa and Reed Ogrosky for many helpful conversations about this family of thin film models. JLM and SS were supported in part by the NSF through JLM's NSF CAREER Grant DMS-1352353.

\bibliographystyle{amsalpha}
\bibliography{../../ThesisReferences}

\end{document}